\theoremstyle{plain}
\newtheorem{theorem}{Theorem}
\newtheorem{lemma}{Lemma}
\newtheorem{corollary}{Corollary}
\newtheorem{proposition}{Proposition}
\newtheorem{conjecture}{Conjecture}
\theoremstyle{definition}
\newtheorem{definition}{Definition}
\newtheorem{remark}{Remark}
\newtheorem{question}{Question}
\def\bdef{\begin{definition}}
\def\endef{\end{definition}}
\def\bthm{\begin{theorem}}
\def\ethm{\end{theorem}}
\def\blm{\begin{lemma}}
\def\elm{\end{lemma}}
\def\brm{\begin{remark}}
\def\erm{\end{remark}}
\def\bprop{\begin{proposition}}
\def\eprop{\end{proposition}}
\def\bcor{\begin{corollary}}
\def\ecor{\end{corollary}}
\def\be{\begin{eqnarray}}
\def\ee{\end{eqnarray}}
\def\beal{\begin{aligned}}
\def\enal{\end{aligned}}
\def\om{\omega}
\def\Om{\Omega}
\def\al{\alpha}
\def\eps{\varepsilon}
\def\phi{\varphi}
\def\f{\varphi}
\def\Id{\mathbb I}
\def\A{\mathbb A}
\def\R{\mathbb R}
\def\T{\mathbb T}
\def\Q{\mathbb Q}
\def\Z{\mathbb Z}
\def\L{\mathcal L}
\def\M{\mathcal M}
\def\~{\tilde}
\def\a{\alpha}
\def\gm{\gamma}
\def\g{\gamma}
\def\th{\theta}
\def\dt{\delta}
\def\lb{\lambda}
\def\Lb{\Lambda}
\def\cE{\mathcal E}
\def\cD{\mathcal D}
\def\be{\begin{equation}}
\def\ee{\end{equation}}
\def\bdef{\begin{definition}}
\def\endef{\end{definition}}
\def\blm{\begin{lemma}}
\def\elm{\end{lemma}}
\def\beal{\begin{aligned}}
\def\enal{\end{aligned}}
\newtheorem*{Pf}{Proof}
\renewenvironment{proof}{\begin{Pf} \begin{upshape}} {\end{upshape} \qed\end{Pf}}
\def\ff {\partial_\phi \phi'}
\def\sf {\partial_\phi s'}
\def\hs {{\hat{s}}}
\def\sg{\sigma}
\title{On conjugacy of convex billiards}
\author{Vadim Kaloshin \& Alfonso Sorrentino}
\address{Department of Mathematics, University of Maryland at College Park,
College Park, MD 20740, US.}
\email{vadim.kaloshin@gmail.com}
\address{Department of Pure Mathematics and Mathematical Statistics,
University of Cambridge, Cambridge, CB3 0WB, UK.}
\email{a.sorrentino@dpmms.cam.ac.uk}
\date{\today}
\subjclass[2000]{37D50, 37E40, 37J35, 37J50, 53C24.}
\begin{document}

\maketitle

\begin{abstract}
Given a strictly convex domain $\Om \subset \R^2$, there is a natural way to define a billiard map in it:
a rectilinear path hitting the boundary reflects so that the angle of reflection is equal to
the angle of incidence.
In this paper we answer a relatively old question of Guillemin.
We show that if two billiard maps are $C^{1,\a}$-conjugate near the boundary,
for some $\a>1/2$, then the corresponding domains are similar, {\it i.e.}
they can be obtained one from the other by a rescaling and an isometry.
As an application, we prove a conditional version of Birkhoff conjecture on the integrability of planar billiards and show that the original conjecture is equivalent to  what we call an {\it Extension problem}.\\
Quite interestingly, our result and a positive solution to this extension problem would provide an answer to a closely related question in spectral theory: if the marked length spectra of
two domains are the same, is it true that they are isometric?
\end{abstract}

\maketitle

\section{Introduction}

A {\it mathematical billiard} is a dynamical model describing the motion of a mass point
inside a (strictly) convex domain $\Omega \subset \R^2$ with smooth boundary.
The (massless) {billiard ball} moves with unit velocity and without friction
following a rectilinear path;  when it hits the boundary it reflects {elastically}
according to the standard {\it reflection law}: the angle of reflection is equal to
the angle of incidence. Such trajectories are sometimes called {\it broken geodesics}.

This conceptually simple model, yet mathematically complicated, has been first introduced
by G. D. Birkhoff \cite{Birkhoff} as a mathematical playground to prove, with as little
technicality as possible, some dynamical applications of Poincare's last geometric
theorem and its generalisations:\\

\begin{quote}
``[...]{\it This example is very illuminating for the following reason: Any dynamical
system with two degrees of freedom is isomorphic with the motion of a particle on
a smooth surface rotating uniformly about a fixed axis and carrying a conservative
field of force with it} (see \cite{Birkhoff1917}). {\it In particular if the surface
is not rotating and if the field of force is lacking, the paths of the particles
will be geodesics. If the surface is conceived of as convex to begin with and then
gradually to be flattened to the form of a plane convex curve $C$, the ``billiard ball''
problems results. But in this problem the formal side, usually so formidable in dynamics,
almost completely disappears, and only the interesting qualitative questions need
to be considered.}[...] ''\\
(G. D. Birkhoff, \cite[pp. 155-156]{Birkhoff})\\
\end{quote}

Since then, billiards have captured many mathematicians' attention and have slowly
become a very popular subject. Despite their apparently simple (local) dynamics, in fact,
their qualitative dynamical properties are extremely {non-local}! This global influence
on the dynamics translates into several intriguing {\it rigidity  phenomena}, which
are at the basis of many unanswered questions and conjectures.\\

\subsection{Conjugate billiard maps.}

The starting point of this work is the following question attributed to Victor Guillemin:

\begin{question} \label{q1}
{\it  Let $f$ and $g$ be smooth Birkhoff billiard maps corresponding to two strictly
convex domains $\Omega_f$ and $\Omega_g$. Assume that  $f$ and $g$ are conjugate,
i.e there exists $h$ such that $f = h^{-1} \circ g \circ h$. What can we say about
the two domains $\Omega_f$ and $\Omega_g$? Are they ``similar'', that is, have
they the same shape?}\\
\end{question}

To our knowledge the only known answer to this question is in the case of circular
billiards. A billiard map in a disc $\cD$, in fact, enjoys the peculiar property of
having  the phase space completely foliated by homotopically non-trivial invariant curves.
It is an example of so-called {\it integrable billiards}. In terms of the geometry
of the billiard domain $\cD$, this reflects the existence of a smooth foliation by
(smooth and convex){\it caustics}, {\it i.e.} (smooth and convex) curves with the property
that if a trajectory is tangent to one of them, then it will remain tangent after each
reflection. It is easy to check that in the circular billiard case this family consists
of concentric circles (figure \ref{circle-billiard}). See for instance \cite[Chapter 2]{Tabach}.\\

\begin{figure} [h!]
\begin{center}
\includegraphics[scale=0.3]{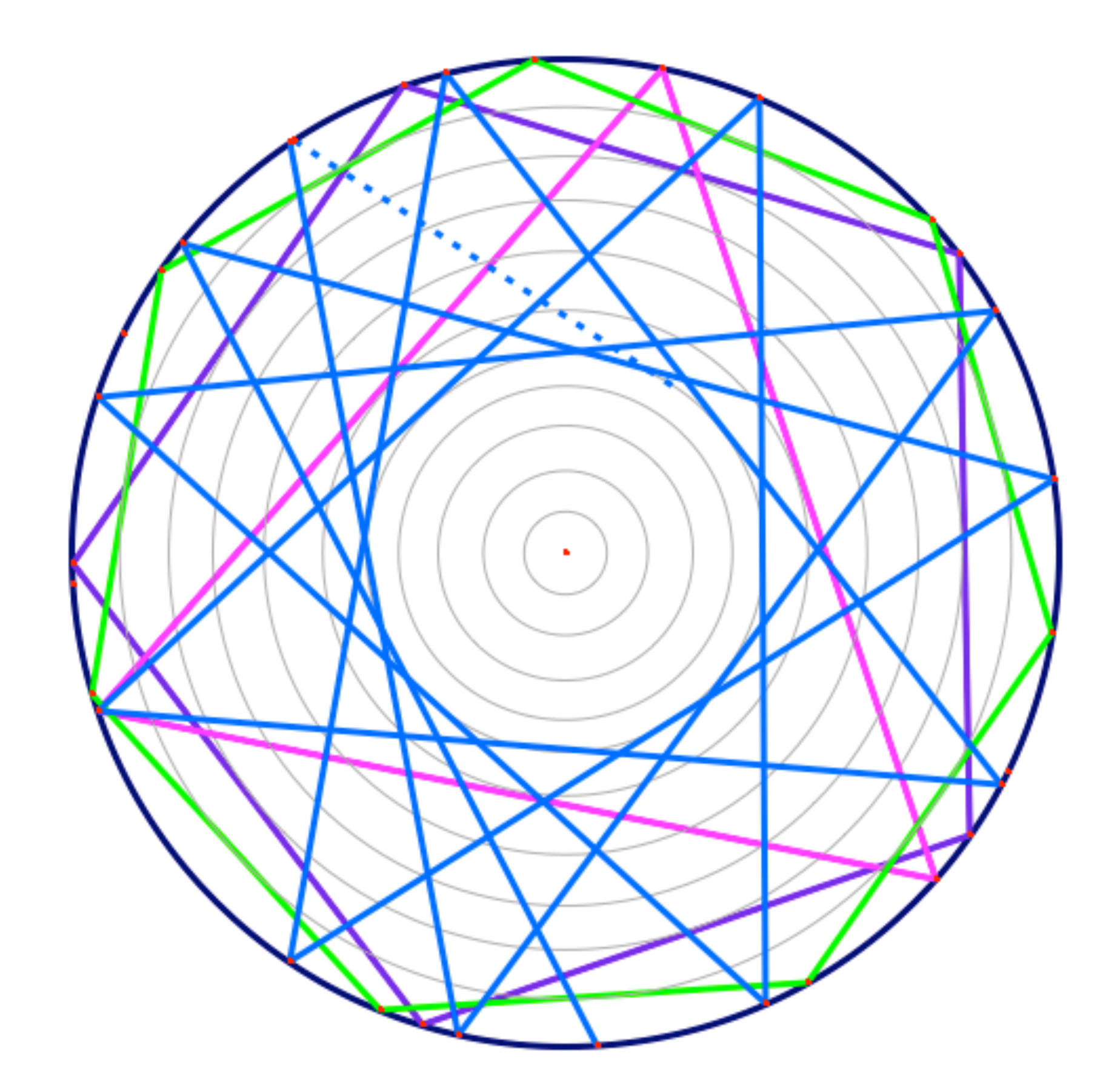}
\caption{}
\label{circle-billiard}
\end{center}
\end{figure}

In \cite{Bialy},  Misha Bialy proved the following beautiful result:\\

\noindent {\bf Theorem (Bialy).}
\noindent {\it If the phase space of the billiard ball map is foliated by continuous
invariant curves which are not null-homotopic, then  it is a circular billiard.}\\

Since having such a foliation is invariant under conjugacy, then Question \ref{q1}
has an affirmative answer if one of the two domains is a circle.\\

In this article we want to deal with a much weaker assumption:

\begin{question}\label{q2}
{\it What happens if we assume that the two billiard maps are conjugate
{\bf only} in a neighborhood of the boundary}?
\end{question}

Our interest in this variant of Question \ref{q1} comes from the following observation.
Circular billiards are not the only examples of integrable billiards.
Although there is no commonly agreed
definition, this can be understood either as the existence of an
{\it integral of motion} or as the presence of a (smooth) family of
caustics that foliate a neighborhood of the boundary (the equivalence
of this two notions is an interesting problem itself).
Billiards in ellipses, therefore, are also integrable. Yet, the dynamical picture
is very distinct from the circular case: as it is showed in figure \ref{ellipse-billiard}, each trajectory which does not pass through
a focal point, is always tangent to precisely one confocal conic section, either
a confocal ellipse or the two branches of a confocal hyperbola (see for example
\cite[Chapter 4]{Tabach}). Thus, the confocal ellipses inside an elliptical billiards
are convex caustics, but they do not foliate the whole domain: the segment between
the two foci is left out. \\
Hence, while it is clear from Bialy's result that a global conjugacy between the billiard maps in a non-circular ellipse and a circle cannot exist, it is everything
but obvious whether a {\it local} conjugacy near the boundary exists or does not.\\
See also Section \ref{secBC}.\\

\begin{figure} [h!]
\begin{center}
\includegraphics[scale=0.2]{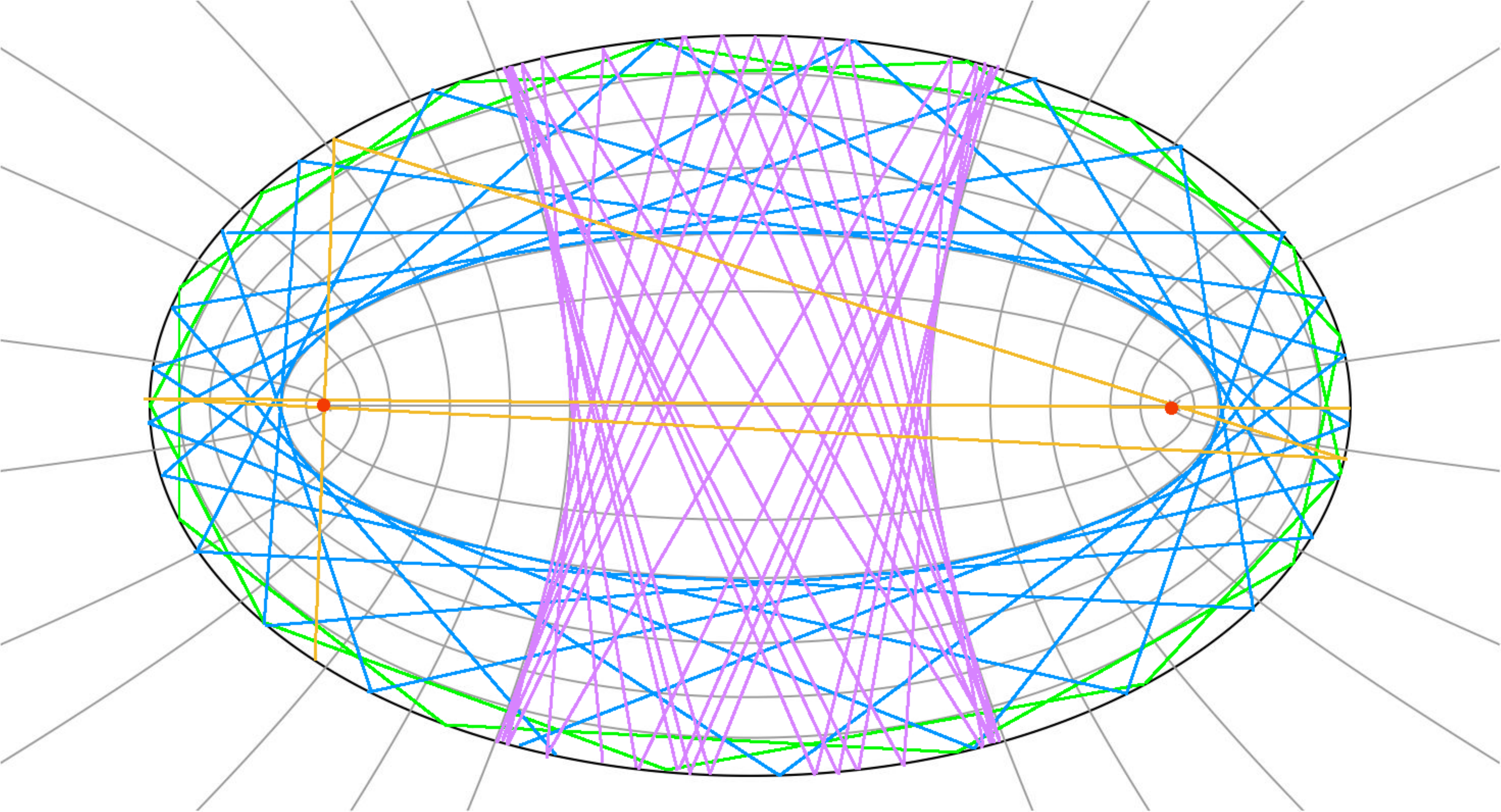}
\caption{}
\label{ellipse-billiard}
\end{center}
\end{figure}

In this article we prove the following result. \\

\noindent{\bf Main Theorem (Theorem \ref{maintheorem}).}
{\it Let $f$ and $g$ be billiard maps corresponding, respectively,
to strictly convex $C^r$ planar domains  $\Om_f$ and $\Om_g$, with $r\ge 4$.
Suppose that $h$ is a $C^{1,\a}$ conjugacy near the boundary with $\a>1/2$,
{\it i.e.} there exists $\dt>0$ such that $f=h^{-1} {g} h$ for any
$s \in \partial \Om_f$ and $0\le \phi\le \dt$.  Then the two domains are
similar, that is they are the same up to a rescaling and an isometry.}

\begin{remark}\label{rem1}
This provides an answer to Questions \ref{q1} and \ref{q2}, under very mild
smoothness assumptions on the conjugacy $h$. Furthermore, as we discuss in Appendix \ref{extension},
it is sufficient to have a $C^{1,\a}$ conjugacy $h$ {\it only }on an invariant
Cantor set which includes the boundary. This invariant Cantor set, for instance, could consist of invariant curves
(caustics) accumulating to the boundary. In the case of smooth strictly convex billiards, the existence of such invariant curves follows from
a famous result of Lazutkin \cite{Lazutkin} (see also \cite{KP}).
\end{remark}

Observe that if we assumed $h$ to be only continuous, then the answer to the above questions  would be negative.
In fact, as pointed out to us by John Mather, billiards in ellipses of different eccentricities cannot be globally  $C^0$--conjugate, due to dynamics at the elliptic periodic points. This provides a negative answer to Question \ref{q1}.
As for Question
\ref{q2}, we can prove the following:\\

\noindent {\bf Proposition \ref{C0conjugacy}}
{\it Let $\Om_f=\mathbb S^1$ be the circle  and $\Om_g$
be an ellipse of non-zero eccentricity. Then near
the boundary their billiard maps are $C^0$-conjugate.

Similary, if $\Om_f$ and $\Om_g$
are  ellipses of different eccentricities, then
their billiard maps are $C^0$-conjugate near the boundary.}\\

\subsection{Birkhoff conjecture}\label{secBC}
Let us now discuss the implications of our result to a famous conjecture
attributed to Birkhoff. Birkhoff conjecture states that amongst all
convex billiards, the only {\it integrable} ones are the ones in ellipses (a circle is a distinct special case).

Despite its long history and the amount of attention that
this conjecture has captured, it still remains essentially open.
As far as our understanding of integrable billiards is concerned,
the two most important related results are
the above--mentioned theorem by Bialy \cite{Bialy} and
a theorem by Mather \cite{Mather82} which proves the non-existence
of caustics if the curvature of the boundary vanishes at one point.
This latter justifies the restriction of our attention  to strictly convex domains.\\

The main result in this paper implies the following {\it Conditional Birkhoff
conjecture}:\\

\noindent{\bf Corollary (Conditional Birkhoff conjecture).}
{\it Let $\a>1/2$. If an integrable billiard is
$C^{1,\a}$-conjugate to an ellipse (resp. a circle) in a neighborhood of
the boundary, then it is an ellipse (resp. a circle).}\\

\vspace{10 pt}

In the light of Remark \ref{rem1} (see also Corollary \ref{main-corollary}), it would suffice
to have such a conjugacy only on a Cantor set of caustics which includes the boundary.
Observe that it follows from Lazutkin's result \cite{Lazutkin}
(see also \cite{KP}) that such
a conjugacy always exists on a Cantor set of caustics that
accumulates on the boundary, but does not include it (see Theorem \ref{Kovachev-Popov}).

The problem becomes understanding when this conjugacy
can be extended up to the boundary and what is its smoothness
(in Whitney's sense).  We state the following conjecture. \\

\begin{conjecture}\label{conj}
{\it  Let $\Omega$ be a smooth integrable billiard domain and
let $\cE$ be an ellipse with the same perimeter and the same Lazutkin
perimeter {\rm (see Section \ref{sec2})}. Then, the above-mentioned conjugacy can be smoothly extended (in Whitney's sense) up
to the boundary.}  \\
\end{conjecture}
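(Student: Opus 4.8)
The plan is to build the desired extension of the conjugacy \emph{explicitly} from the caustic foliations of the two domains, and then to reduce its smoothness up to the boundary to a single regularity statement about the integrable structure of $\Om$ near $\partial\Om$.

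First I would pass to Lazutkin coordinates $(x,y)$ on the phase space of $\Om$ near $\partial\Om$, and $(X,Y)$ on that of $\cE$ near $\partial\cE$, where $x,X\in\T=\R/\Z$ are the normalised Lazutkin parameters, $y,Y\ge 0$, and $\{y=0\}$, $\{Y=0\}$ are the boundaries; in these coordinates both billiard maps take Lazutkin's normal form \cite{Lazutkin,KP}, a monotone twist map fixing the boundary circle to infinite order. Since $\Om$ is integrable, a one--sided neighbourhood of $\{y=0\}$ is foliated by smooth invariant curves (caustics) $\Gm_\om$, which can be labelled by their rotation number $\om\in[0,\om_0)$ with $\Gm_0=\partial\Om$, and on each leaf the billiard map is conjugate to the rigid rotation by $\om$; the same holds for $\cE$, whose caustics are the confocal ellipses. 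I would then define $h$ by sending $\Gm_\om$ onto $\Gm_\om^{\cE}$ and, on each leaf, conjugating the two rotations by $\om$. The leafwise conjugacy is a rotation that is free to choose, and one checks directly that \emph{any} choice depending smoothly on $\om$ produces a homeomorphism conjugating the billiard maps on the whole foliated neighbourhood; in particular one can arrange $h$ to restrict, on the Lazutkin caustics, to the Cantor--set conjugacy of Theorem \ref{Kovachev-Popov}. After this, the conjecture becomes the assertion that this $h$ admits a $C^\infty$ Whitney extension across $\{y=0\}$.

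Next I would decompose $h$ into its transverse and angular components in coordinates adapted to the two foliations. The angular component is the freely chosen leafwise rotation, which I would simply take smooth in $\om$ (or matched smoothly to the one implicit in the construction of Theorem \ref{Kovachev-Popov}); the only genuine content is then the transverse component, i.e. the claim ($\star$): \emph{for the integrable billiard $\Om$, the caustic foliation near $\partial\Om$ and the rotation--number function $\om=\om(x,y)$ on it extend $C^\infty$, in Whitney's sense, up to $\{y=0\}$, with $\om|_{y=0}=0$ and $\partial_y\om|_{y=0}$ equal to the nonzero constant dictated by the Lazutkin perimeter.} Granting ($\star$) — which holds for $\cE$, the confocal foliation being visibly smooth up to the boundary — $h$ becomes $\Psi_\cE\circ\Psi_\Om^{-1}$ for changes of coordinates $\Psi_\Om,\Psi_\cE$ between Lazutkin and caustic--adapted coordinates that are smooth up to the boundary, hence $h$ is $C^\infty$ up to $\{y=0\}$; the equality of the perimeters and of the Lazutkin perimeters then forces the boundary $1$--jet of $h$ to be nondegenerate, so $h$ is a genuine diffeomorphism of a one--sided neighbourhood of $\partial\Om$ onto one of $\partial\cE$, in particular a $C^{1,\a}$ conjugacy with $\a>1/2$ including the boundary, and Theorem \ref{maintheorem} then applies and gives $\Om=\cE$. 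My reason to expect ($\star$) is that Lazutkin's construction already produces the full \emph{asymptotic} expansion of $\om$ at $\{y=0\}$; for an \emph{integrable} billiard the smoothness of the caustic foliation, combined with the implicit function theorem applied to the (smooth) generating function of the billiard map in Lazutkin coordinates, should upgrade that asymptotic expansion to honest $C^\infty$ regularity.

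The hard part will be precisely ($\star$), and it is hard for a structural reason: the argument above shows that ($\star$) together with the Main Theorem implies that every smooth integrable planar billiard is an ellipse — i.e. the Birkhoff conjecture — while conversely the Birkhoff conjecture makes ($\star$) immediate. So proving this conjecture is equivalent to resolving Birkhoff's. Concretely, what is hidden inside ($\star$) is that \emph{all} the higher--order boundary invariants of the billiard dynamics of $\Om$ — not only its perimeter and its Lazutkin perimeter — are pinned down by integrability; controlling this infinite hierarchy of invariants near the parabolic boundary circle is exactly where the methods behind Bialy's \cite{Bialy} and Mather's \cite{Mather82} theorems, and all currently available tools, stop. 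This is why we state it as a conjecture (the \emph{Extension problem}) rather than as a theorem.
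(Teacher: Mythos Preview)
The statement you were asked to address is stated in the paper as a \emph{conjecture}, not a theorem, and the paper gives no proof of it; what the paper does prove (Corollary~1) is that Conjecture~\ref{conj} is equivalent to the Birkhoff conjecture. Your write--up correctly recognises this: you do not claim a proof, you isolate the regularity statement $(\star)$ as the genuine content, and your argument that $(\star)$ together with Theorem~\ref{maintheorem} implies Birkhoff (and conversely) is exactly the reasoning behind the paper's Corollary~1 and its Appendix~\ref{extension} discussion of the Extension problem. So your treatment is in line with the paper's own, and there is nothing to correct.

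One small remark on presentation: your construction of $h$ by matching caustics with equal rotation number and choosing the leafwise rotation smoothly is essentially the argument the paper uses in Proposition~\ref{C0conjugacy} to produce the $C^0$ conjugacy between integrable billiards; the paper simply does not attempt the smoothness part, which is precisely your $(\star)$. Your identification of $(\star)$ as equivalent to Birkhoff is perhaps slightly stronger than what the paper states literally --- the paper phrases the equivalence at the level of Conjecture~\ref{conj} rather than of $(\star)$ --- but since your reduction from Conjecture~\ref{conj} to $(\star)$ is clean, this is a harmless sharpening.
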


This conjecture is closely related to the  {\it Extension problem} that we shall discuss more thoroughly
in Appendix \ref{extension}.\\

An interesting remark is the following.

\begin{corollary}
Birkhoff Conjecture is true if and only if
Conjecture \ref{conj} is true.
 \end{corollary}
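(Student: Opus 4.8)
The plan is to prove the two implications separately, the forward one being essentially trivial and the reverse one being where the content of the paper is used. For the ``only if'' direction, suppose Birkhoff Conjecture holds. Let $\Om$ be a smooth integrable billiard domain and let $\cE$ be the ellipse with the same perimeter and the same Lazutkin perimeter as in Conjecture \ref{conj}. By Birkhoff Conjecture, $\Om$ must already be an ellipse; but an ellipse is determined up to isometry and rescaling by its eccentricity, and the two normalizations (perimeter and Lazutkin perimeter) pin down the scale and force $\Om$ to be isometric to $\cE$ itself. Since $\Om$ is (isometric to) $\cE$, the billiard maps literally coincide after this identification, so the conjugacy between the two billiard maps near the boundary is the identity (conjugated by the isometry), which is certainly smooth in Whitney's sense up to the boundary. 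Hence Conjecture \ref{conj} holds.

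For the ``if'' direction, assume Conjecture \ref{conj}. Let $\Om$ be an integrable billiard domain; we must show $\Om$ is an ellipse (or a circle). Since $\Om$ is integrable, by Lazutkin's theorem (invoked as Theorem \ref{Kovachev-Popov}) there is a Cantor set of caustics accumulating on the boundary, and on this set the billiard map of $\Om$ is conjugate to the billiard map of the ellipse $\cE$ having the same perimeter and Lazutkin perimeter — this is the standard normal-form statement that the Lazutkin coordinates linearize the dynamics on caustics, so two domains with the same pair of perimeters have the ``same'' dynamics on the respective Cantor sets of caustics. By Conjecture \ref{conj}, this conjugacy extends smoothly (in Whitney's sense) up to the boundary; in particular it is $C^{1,\a}$ for every $\a<1$, so certainly for some $\a>1/2$, on an invariant Cantor set that includes the boundary. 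We are now exactly in the setting of Remark \ref{rem1} / Corollary \ref{main-corollary}: a $C^{1,\a}$ conjugacy with $\a>1/2$ on an invariant Cantor set containing the boundary forces the two domains $\Om$ and $\cE$ to be similar. A domain similar to an ellipse is an ellipse (and similar to a circle is a circle), so $\Om$ is an ellipse, proving Birkhoff Conjecture.

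The only genuinely non-formal point — the step I expect to be the ``main obstacle'' to write cleanly — is the precise statement that integrability of $\Om$ yields a Whitney-smooth (or at least $C^{1,\a}$) conjugacy to the model ellipse $\cE$ \emph{on the Cantor set of caustics, before the extension}. This requires that Lazutkin-type coordinates conjugate the billiard dynamics on each caustic to a rigid rotation whose rotation number depends only on the caustic parameter in a way governed by the perimeter and Lazutkin perimeter; matching these two invariants then makes the rotation numbers, and hence the dynamics on the two Cantor sets, coincide, and one reads off the conjugacy. One must also check that this conjugacy is invariant under the billiard maps and that the Cantor set (together with the boundary) is invariant — both are immediate from the construction. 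Once Conjecture \ref{conj} upgrades this to a conjugacy on a set including the boundary with the required regularity, the Main Theorem (in the Cantor-set form of Remark \ref{rem1}) does the rest, and the deduction is complete.
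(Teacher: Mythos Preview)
Your proof is correct and follows the same approach as the paper: the forward direction identifies $\Omega$ with an ellipse via Birkhoff so that the identity map serves as the required conjugacy, and the reverse direction combines the extended conjugacy furnished by Conjecture \ref{conj} with Corollary \ref{main-corollary} (the Cantor-set form of the Main Theorem) to conclude that $\Omega$ is similar to an ellipse. The only minor difference is that in the forward direction you assert that perimeter plus Lazutkin perimeter determine an ellipse uniquely---a plausible but unproven claim---whereas the paper sidesteps this by simply taking $\cE = \Omega$ (already an ellipse by Birkhoff), so the matching of invariants is automatic and no uniqueness statement is needed.
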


\begin{proof}
($   \Longrightarrow$) It follows from Birkhoff's conjecture that
$\Omega=\cE$,  for some ellipse $\cE$. Of course $\cE$ will have
the same perimeter and Lazutkin perimeter as $\Omega$. Therefore
it is sufficient to consider the identity map as a possible conjugacy.\\
($\Longleftarrow$) If Conjecture \ref{conj} is true, then
we can apply our Conditional version of Birkhoff conjecture (and Corollary \ref{main-corollary}) to deduce
that $\Omega$ is an ellipse.\\
\end{proof}

\subsection{Length spectrum} \label{sec1.3}
Finally, we would like to describe how our result is related to a classical spectral problem:  {\it Can one hear the shape of a drum?}, as formulated in a very suggestive way by M. Kac \cite{Kac}. 
In the context of billiards (which can be considered as limit geodesic flow), this question becomes: what information on the geometry of the billiard domain are encoded in the length spectrum of its closed orbits?\\

Let us recall some basic definitions. The rotation number of a periodic billiard trajectory
(respectively, a closed broken geodesic) is a rational
number
\[
\dfrac{p}{q}
\ =\
\dfrac{\text{winding number}}
{\text{number of reflections}}\ \in\ \big(0,\frac 12\Big],
\]
where the winding number $p>1$ is defined as follows (see Section \ref{sec2} for a more precise definition).
Fix the positive orientation of $\partial \Om$ and
pick any reflection point of the closed geodesic on �
$\partial \Om$; then follow the trajectory and count
how many times it goes around $\partial \Om$ in
the positive direction until it comes back to
the starting point.
Notice that inverting the direction of motion for every
periodic billiard trajectory  of rotation number
$p/q  \in (0, 1/2]$, we obtain an orbit of rotation number
$(q-p)/q \in [1/2,1)$. \\

In \cite{Birkhoff}, Birkhoff proved that for every $p/q \in (0, 1/2]$ in lowest terms,
there are at least two closed geodesics of rotation number
$p/q$: one maximizing the total length and the other obtained by min-max methods (see also  \cite[Theorem 1.2.4]{Siburg}).\\

The {\it length spectrum} of $\Om$ is defined as the set
\[
\L_\Om:=\mathbb N\{\text{ lengths of closed geodesics in }\Om\}
\cup \mathbb N \ell(\partial \Omega),
\]

where $\ell(\partial \Omega)$ denotes the length of the boundary.

One can refine this set of information in a more useful way.
For each rotation number $p/q$ in lowest terms, let us consider the maximal
length of closed geodesics having rotation number $p/q$ and
``label'' it using the rotation number.  This map is called the {\it marked length spectrum} of $\Omega$:
\[
\M\L_\Om:\Q\cap\big(0,\frac 12\Big] \longrightarrow \R_+
\]

This map is closely related to Mather's minimal average action (or $\beta$-function). See Appendix \ref{beta}.\\

One can ask the following questions,  which are related to a well-known conjecture by Guillemin and Melrose.\\

\begin{question}  \label{q3}{\it Let $\Om$ and $\Om'$ be two strictly
convex $C^{6}$ domains with the same length spectra
$\M\L_{\Om}$ and $\M\L_{\Om'}$. Are  the two domains
$\Om$ and $\Om'$ isometric?}
\end{question}

The same question might be asked for marked length spectrum.\\

As we will discuss in Appendix \ref{extension}, It follows from our main result and Corollary \ref{main-corollary} that a positive answer to this question
for strictly convex domains relies on a solution to an extension problem, similar to the one stated in Conjecture \ref{conj}. See Appendix \ref{extension} for more details.\\

\begin{remark}
A remarkable relation exists between the length spectrum
of a billiard in a convex domain $\Omega$  and the spectrum of the Laplace operator
in $\Omega$ with Dirichlet boundary condition:
\[
\left\{
\begin{array}{l}
\Delta f = \lb f \quad \text{in}\; \Om \\
f|_{\partial \Om} = 0.\end{array}\right.\\
\]

From the physical point of view, the eigenvalues $\lb$
are the eigenfrequencies of the membrane $\Om$ with
a fixed boundary. Denote by $\Delta_\Om$ the Laplace
spectrum of eigenvalues solving this problem.
The famous question of M. Kac in its original version asks {\it if one can
recover the domain from the Laplace spectrum}.
For general manifolds there are counterexamples (see
\cite{GordonWebbWolpert}).\\

K. Anderson and R. Melrose \cite{AM}
proved the following relation between the Laplace spectrum and the length spectrum:\\

\noindent {\bf Theorem (Anderson-Melrose).}
{\it The sum
\[
\sum_{\lb_i \in spec \Delta}
cos (t \sqrt{-\lb_i})
\]
is a well-defined generalized function (distribution) of $t$,
smooth away from the length spectrum. That is, if $l > 0$
belongs to the singular support of this distribution, then
there exists either a closed billiard trajectory of length l,
or a closed geodesic of length l in the boundary of
the billiard table.}\\

\end{remark}

\vspace{20 pt}

\subsection{Outline of the article}
The article is organized as follows. In Section \ref{sec2} we discuss some properties of the billiard map and describe its Taylor expansion near the boundary. The proof of this result will be postponed to Appendix \ref{appendix}. In Section \ref{sec3} we prove the main results stated in the Introduction. In Appendix \ref{extension} we discuss a version of Lazutkin's famous result on the existence of smooth caustics which accumulate on the boundary. This will allow us to state an ``Extension problem'', whose conjecturally positive solution is related to Birkhoff conjecture and Guillemin-Melrose's problem (see the above discussion in sections \ref{secBC} and \ref{sec1.3}). Finally, in Appendix \ref{appLaz} we prove some technical results on Lazutkin coordinates, that are used in the proof of the main results.

\vspace{20 pt}

\noindent{\sc Acknowledgements.}
The authors thank John Mather for pointing out that
billiards in ellipses of different eccentricities are
not globally $C^0$-conjugate, due to dynamics at the elliptic
periodic points.  The authors express also gratitude to Peter Sarnak and Sergei Tabachnikov for several interesting remarks. \\

\section{The billiard map}\label{sec2}

In this section we would like to recall some properties of the billiard map. We refer to \cite{Siburg, Tabach} for a more comprehensive introduction to the study of billiards.\\

Let $\Omega$ be a strictly convex domain in $\R^2$ with $C^r$ boundary $\partial \Omega$,
with $r\geq 3$. The phase space $M$ of the billiard map consists of unit vectors
$(x,v)$ whose foot points $x$ are on $\partial \Omega$ and which have inward directions.
The billiard ball map $f:M \longrightarrow M$ takes $(x,v)$ to $(x',v')$, where $x'$
represents the point where the trajectory starting at $x$ with velocity $v$ hits the boundary
$\partial \Omega$ again, and $v'$ is the {\it reflected velocity}, according to
the standard reflection law: angle of incidence is equal to the angle of reflection (figure \ref{billiard}).

\begin{figure} [h!]
\begin{center}
\includegraphics[scale=0.35]{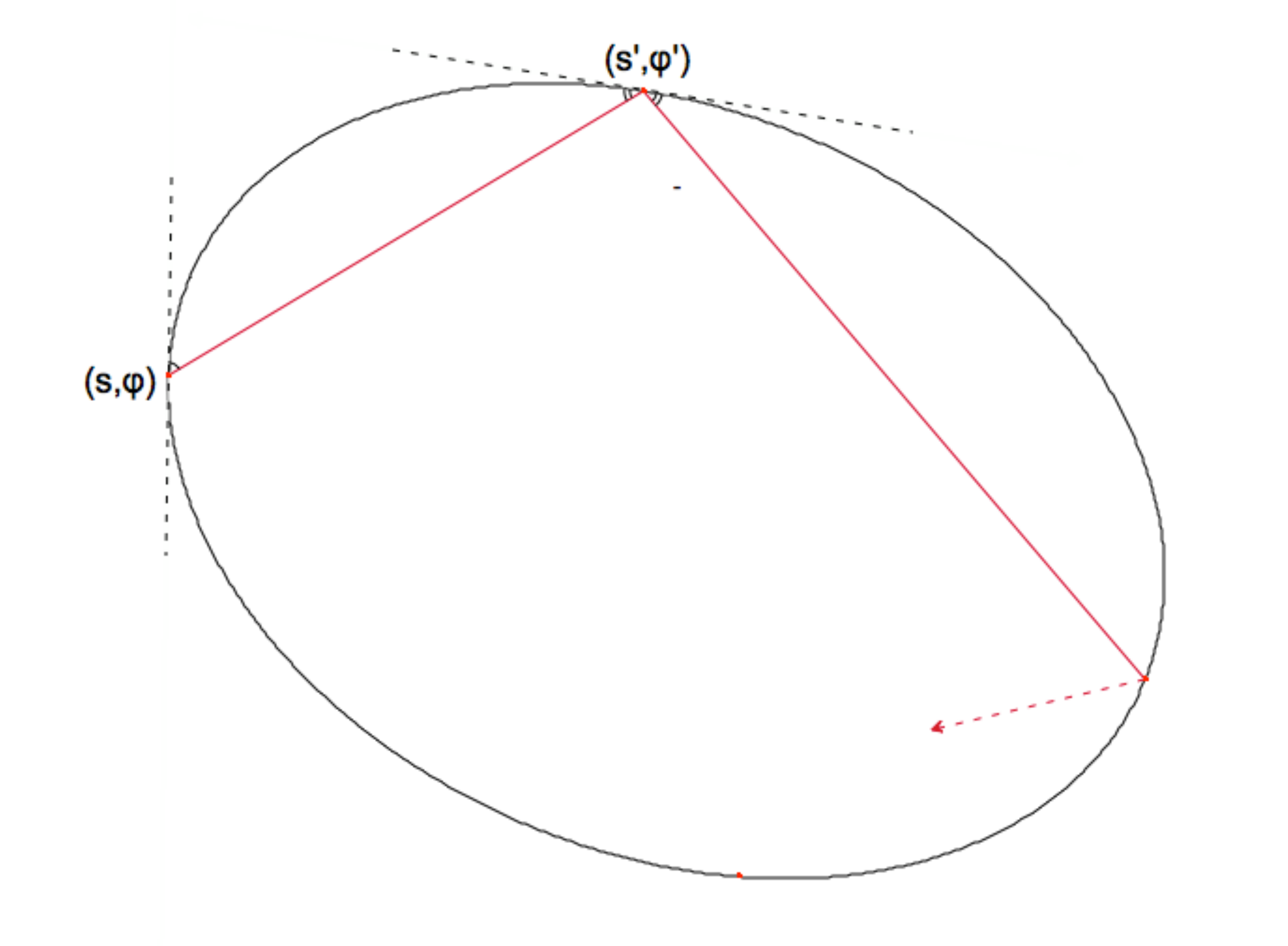}
\caption{}
\label{billiard}
\end{center}
\end{figure}

\begin{remark}
Observe that if $\Omega$ is not convex, then the billiard map is not continuous.
Moreover, as pointed out by Halpern \cite{Halpern}, if the boundary is not at
least $C^3$, then the flow might not be complete.
\end{remark}

Let us introduce coordinates on $M$.
We suppose that $\partial \Omega$ is parametrized  by  arc-length $s$ and
let $\g:  [0, l] \longrightarrow \R^2$ denote such a parametrization,
where $l=l(\partial \Omega)$ denotes the length of $\partial \Omega$. Let $\phi$
be the angle between $v$ and the positive tangent to $\partial \Omega$ at $x$.
Hence, $ M$ can be identified with the annulus $\A = [0,l] \times (0,\pi)$
and the billiard map $f$ can be described as

\begin{eqnarray*}
f: [0,l] \times (0,\pi) &\longrightarrow& [0,l] \times (0,\pi)\\
(s,\phi) &\longmapsto & (s',\phi').
\end{eqnarray*}

In particular $f$ can be extended to $\bar{\A}=[0,l] \times [0,\pi]$ by fixing $f(s,0)=f(s,\pi)= s$
for all $s$. \\

Let us denote by
$$
\ell(s,s') := \|\g(s) - \g(s')\|
$$
the Euclidean distance between two points on $\partial \Omega$. It is easy to prove that
\begin{equation}
\left\{ \begin{array}{l}
\dfrac{\partial \ell}{\partial s}(s,s') = - \cos \phi \\
\\
\dfrac{\partial \ell}{\partial s'}(s,s') = \cos \phi'\,.\\
\end{array}\right.
\end{equation}

\begin{remark}
If we lift everything to the universal cover and introduce new coordinates
$(\tilde{s},r)=(s, \cos \phi) \in \R \times (-1,1)$,then the billiard map is a twist map
with $\ell$ as generating function. See \cite{Siburg, Tabach}.\\
\end{remark}

Particularly interesting billiard orbits are {\it periodic orbits}, {\it i.e.} billiard orbits 
$X=\{x_k\}_{k\in\Z}:=\{(s_k,\phi_k)\}_{k\in \Z}$ for which there exists an integer $q\geq 2$ such that
$x_k=x_{k+q}$ for all $k\in \Z$. The minimal of such $q's$ represents the {\it period} of the orbit.
However periodic orbits with the same period may be of very different topological types. A useful topological invariant that allows us to distinguish amongst them is the so-called {\it rotation number}, which can be easily defined as follows. Let $X$ be a periodic orbit of period $q$ and consider the corresponding $q$-tuple $(s_1,\ldots, s_q) \in \R/l\Z$. For all $1\leq k \leq q$, there exists $\lambda_k \in (0,l)$ such that $s_{k+1}=s_k+\lb_k$  (using  the periodicity, $s_{q+1}=s_1$). Since the orbit is periodic, then $ \lb_1 +\ldots + \lb_k \in l\Z$ and takes value between $l$ and $(q-1)l$. The integer $p:= \frac{\lb_1 +\ldots + \lb_k}{l}$ is called the {\it winding number} of the orbit. The rotation number of $X$ will then be the rational number $\rho(X):=\frac{p}{q}$.
Observe that changing the orientation of the orbit replaces the rotation number $\frac{p}{q}$ by 
$\frac{q-p}{q}$. Since we do not distinguish between two opposite orientations, we can assume that $\rho(X) \in (0,\frac{1}{2}\big] \cap \Q$.\\

In \cite{Birkhoff}, as an application of Poincare's last geometric theorem, Birkhoff proved the following result.\\

\noindent{\bf Theorem [Birkhoff]}
{\it For every $p/q \in (0, 1/2]$ in lowest terms, there are at least two geometrically distinct periodic billiard trajectories with rotation number $p/q$}.\\

\vspace{10 pt}

Another important aspect in the study of billiard maps  is the role played by the curvature of the boundary in determining the local and global dynamics.  
The proof of our main results will strongly rely on the following formula for $Df$, 
which consists of  a Taylor expansion of $Df$ near the boundary ({\it i.e.} for small $\phi$) and provides an explicit description of the coeffiecients in terms of the curvature. 
We believe that this might well be of independent interest. \\

\begin{proposition}\label{propformbillmap}
Let $\Omega$ be a strictly
convex domain with  $C^4$ boundary. Let  $\rho(s)$ be the curvature of
the boundary at point $\gamma(s)$. Then, for small $\phi$, the differential
of the billiard map has the following form
\[
Df(s,\phi)=L(s)+\phi A(s)+ O(\phi^2)
\]
where
\be
L(s):=\left( \beal \quad 1 \quad & \quad \frac {2}{\rho(s)} \
\\  \quad 0 \quad & \qquad 1 \enal \right)
\ee
and
\be
A(s):=
\left(
\beal
\qquad  - \frac 43 \frac{\rho'(s)}{\rho^2(s)} \qquad
 &  \qquad - \frac 83 \frac{\rho'(s)}{\rho^3(s)} \qquad \\
\qquad - \frac 23 \frac{\rho'(s)}{\rho (s)}   \qquad  &
\qquad \ \frac 43 \frac{\rho'(s)}{\rho^2(s)} \qquad
\enal
\right).
\ee
\end{proposition}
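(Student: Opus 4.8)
The plan is to obtain $Df$ by differentiating the standard implicit relations for the billiard map and then carefully Taylor-expanding everything in the reflection angle $\phi$. Recall that if $f(s,\phi)=(s',\phi')$ then the generating function $\ell(s,s')=\|\gamma(s)-\gamma(s')\|$ satisfies $\partial_s\ell=-\cos\phi$ and $\partial_{s'}\ell=\cos\phi'$. Differentiating the first identity with respect to $s$ and $\phi$ (treating $s'$ as a function of $(s,\phi)$) and the second likewise, one gets the four entries of $Df$ expressed through the partial derivatives $\ell_{ss},\ell_{ss'},\ell_{s's'}$ evaluated along the orbit, together with the factors $\sin\phi$, $\sin\phi'$. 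This is the classical twist-map computation; the resulting formulas are
\[
\ss=-\frac{\ell_{ss}+\sin\phi\cdot(\text{correction})}{\ell_{ss'}},\qquad \text{etc.},
\]
but the key point for us is that the only geometric input is the behavior of $\ell$ and its derivatives as the two footpoints $\gamma(s),\gamma(s')$ collide, which happens precisely as $\phi\to 0$.

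First I would set up the short-chord asymptotics. For a $C^4$ strictly convex curve parametrized by arclength, when $s'=s+\sigma$ with $\sigma$ small one has $\ell(s,s')=\sigma - \frac{\rho(s)^2}{24}\,\sigma^3 + \cdots$ (the coefficient coming from the curvature), and more precisely one needs the expansion to the order that produces the $O(\phi)$ term in $Df$, which forces keeping the $\sigma^4$ term involving $\rho'$. Simultaneously, the incidence/reflection angles are tied to $\sigma$ by $\cos\phi = -\ell_s$ and $\cos\phi'=\ell_{s'}$, which to leading order gives $\phi \approx \tfrac12\rho(s)\sigma$ and $\phi'\approx\tfrac12\rho(s)\sigma$ as well — consistent with the leading matrix $L(s)=\begin{pmatrix}1 & 2/\rho(s)\\ 0 & 1\end{pmatrix}$, which is nothing but the parabolic/shear limit (a short chord advances $s$ by $\sigma\approx 2\phi/\rho$ and barely changes $\phi$). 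So the zeroth-order term $L(s)$ should fall out immediately once the leading asymptotics are in place; the entire substance of the proposition is the first-order coefficient $A(s)$.

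To get $A(s)$ I would carry the expansions of $\ell$, of $s'-s$ as a function of $(s,\phi)$, and of $\phi'$ as a function of $(s,\phi)$, to one order beyond leading, systematically tracking the terms linear in $\rho'(s)$ (the curvature being frozen at $\rho(s)$ contributes only to $L$ and to the $O(\phi^2)$ remainder, by a symmetry/parity argument one can invoke to avoid spurious terms). Concretely: invert $\cos\phi=-\ell_s(s,s')$ to write $\sigma=\sigma(s,\phi)$ as a series in $\phi$ with coefficients in $\rho,\rho'$; substitute into $\cos\phi'=\ell_{s'}(s,s')$ to get $\phi'=\phi'(s,\phi)$; then differentiate the pair $(s',\phi')=(s+\sigma,\phi')$ in $s$ and $\phi$ and collect the $\phi^0$ and $\phi^1$ contributions. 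Matching the $\phi^1$ contributions against the claimed matrix is then a finite, if somewhat lengthy, bookkeeping exercise.

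The main obstacle — and the reason the paper defers this to an appendix — is precisely the control of these higher-order terms: one must expand $\ell(s,s+\sigma)$ to fourth order in $\sigma$ \emph{with the arclength parametrization}, which means expanding $\gamma(s+\sigma)$ using $\gamma'=T$, $\gamma''=\rho N$, $\gamma'''=\rho' N-\rho^2 T$, etc., then take the norm and re-expand, and finally re-invert the relation between $\sigma$ and $\phi$ consistently on both ends of the chord (the expansions at $s$ and at $s'=s+\sigma$ differ, and the $\rho'$-dependence enters exactly through this asymmetry). Keeping the $\sin\phi$ and $\sin\phi'$ prefactors in $Df$ expanded to the right order is an additional place where an error of one order would corrupt $A(s)$. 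I expect no conceptual difficulty beyond organizing this computation — e.g. by working in a Frenet frame at $s$ and expressing every quantity as a polynomial in $\sigma$ with coefficients $1,\rho,\rho',\rho^2,\dots$ — so the ``hard part'' is really the disciplined truncation rather than any new idea; the answer $L(s)+\phi A(s)+O(\phi^2)$ then emerges by reading off coefficients, and the determinant check $\det Df\equiv 1$ (up to $O(\phi^2)$) provides a useful consistency test on $A(s)$: indeed $\operatorname{tr}(L^{-1}A)$ should vanish, which one verifies against the displayed entries.
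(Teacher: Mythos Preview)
Your approach is sound and would work, but it takes a genuinely different route from the paper's. You propose to expand the chord length $\ell(s,s+\sigma)$ and the incidence/reflection angles as power series in $\sigma$ via the Frenet frame, invert to express $\sigma$ and $\phi'$ as series in $\phi$, and then differentiate the resulting expansions of $(s',\phi')$. The paper instead begins from the explicit closed-form expressions for the four entries of $Df$ (quoted from Katok--Strelcyn), for instance $\partial_\phi s'=\ell/\sin\phi'$ and $\partial_\phi\phi'=(\rho(s')\ell-\sin\phi')/\sin\phi'$, and computes the limits as $\phi\to 0$ directly from those. The zeroth-order matrix $L(s)$ drops out of a neat self-referential l'H\^opital argument: writing $L=\lim_{\phi\to 0}\ell/\sin\phi'$ and differentiating gives $L=L/(\rho L-1)$, hence $L=2/\rho$. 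The entries of $A(s)$ are then obtained by differentiating the closed-form expressions once more in $\phi$ and again passing to the limit. This completely bypasses your fourth-order Frenet expansion of $\ell$; the trade-off is that the paper must cite the Katok--Strelcyn formulas as a black box, whereas your scheme is self-contained. Both approaches involve comparable bookkeeping for the $O(\phi)$ terms.

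One minor correction: in $(s,\phi)$ coordinates the billiard map preserves $\sin\phi\,ds\wedge d\phi$, not $ds\wedge d\phi$, so the correct consistency check is $\det Df=\sin\phi/\sin\phi'$ rather than $\det Df\equiv 1$; correspondingly $\operatorname{tr}(L^{-1}A)$ need not vanish (and indeed equals $\tfrac{4}{3}\rho'/\rho^2$ for the displayed $A$). This does not affect your strategy, only the form of the check.
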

\vspace{10 pt}

We postpone the proof of  this Proposition to Appendix \ref{appendix}.\\

\vspace{20 pt}

Before concluding this section, we would like to recall another important result in the theory of billiards. In \cite{Lazutkin} V. Lazutkin introduced a very special
change of coordinates that reduces the billiard map $f$ to a very simple form.

Let $L_\Omega:  {[0,l]\times [0,\pi] \to  \T \times [0,\dt]}$  with small $\dt>0$ be given by
\begin{equation}\label{Lazutkincoord}
L_\Omega(s,\phi)=\left(x=C^{-1}_\Omega \int_0^s \rho^{-2/3}(s)ds,\qquad
y=4C_\Omega^{-1}\rho^{1/3}(s)\ \sin \phi/2 \right),
\end{equation}
where $C_\Omega := \int_0^l \rho^{-2/3} (s)ds$ is sometimes called the {\it Lazutkin perimeter}
(observe that it is chosen so that period of $x$ is one).

In these new coordinates the billiard map becomes very simple (see \cite{Lazutkin}):

\be \label{lazutkin-billiard-map}
f_L(x,y) = \Big( x+y +O(y^3),y + O(y^4) \Big)
\ee

In particular, near the boundary $\{\phi=0\} = \{y=0\}$, the billiard map $f_L$ reduces to
a small perturbation of the integrable map $(x,y)\longmapsto (x+y,y)$.

\begin{remark}
Using this result and KAM theorem, Lazutkin proved in \cite{Lazutkin}
that if $\partial \Omega$
is sufficiently smooth (smoothness is determined by KAM theorem), then there exists a positive measure set of caustics, which accumulates on the boundary and on which the motion is smoothly conjugate to a rigid rotation. See Appendix \ref{extension}.\\
\end{remark}

\section{Main results}\label{sec3}
We now prove the main results of this paper.
\bthm \label{maintheorem} Let $\Om_f$ and $\Om_g$ be two $C^4$ smooth
strictly convex domains and $\al>1/2$. If \ $\Om_f$ and $\Om_g$ are not similar,
then there cannot exist any $C^{1,\al}$ conjugacy near the boundary.

If billiard maps $f$ and $g$ can be $C^{1,\al}$ conjugate for some $\al>1/2$,
then $\Om_f$ and $\Om_g$ are similar.
\ethm

\bcor Let $\Om_f$ be the circle or an ellipse and $\Om_g$
be an ellipse of different eccentricity. Then near the boundary
their billiard maps are not $C^{1,\al}$-conjugate
for any $\al>1/2$.
\ecor

\begin{remark}\label{remmaithm}
We emphasize that the proof presented below works also
in the following framework. Let $\mathcal C_f$ and
$\mathcal C_g$ be two invariant Cantor sets consisting of families of
invariant curves and the boundaries (see Theorem \ref{Kovachev-Popov}).
Assume there exists a $C^{1,\a}$ conjugacy
$h:\mathcal C_f \to \mathcal C_g$, where it is smoothness in
the Cantor direction is understood in the sense of Whitney.
The reader should have in mind both settings.\\
\end{remark}

\noindent {\bf Outline of the proof.} The main ideas behind the proof can be summarized as follows.
\begin{itemize}
\item[{\Small{\sc Step 1:}}]  We use the conjugacy $f=h^{-1} {g} h$ to obtain equation (\ref{eqstep1}), which provides a relation between the differentials $Df$, $Dg$ and $Dh$.
\item[{\Small{\sc Step 2:}}]  Using Proposition \ref{propformbillmap}, we rewrite  $Df$, $Dg$, $Dh$ and $Dh^{-1}$, so to single out the terms of order $O(1)$, $O(\phi)$ and $o(\phi)$, as $\phi$ goes to zero.
\item[{\Small{\sc Step 3:}}] Using the results of Step 2 and the conjugacy equation  (\ref{eqstep1}) from Step 1, we obtain a more explicit matrix relation (\ref{eqstep2}) that will be the main object of our further investigation.
\item[{\Small{\sc Step 4:}}] We look at the $O(1)$ terms in the relation of Step 3 and deduce that $Dh(s,0)$ is upper-triangular.
\item[{\Small{\sc Step 5:}}] We look at the left lower entries in the relation from Step 3. It follows from Step 4 that there are no terms of $O(1)$. We study then terms of $O(\phi)$. This is quite an involved part of the proof, that we split in several parts (\ref{equality0}-\ref{equality4}). As a final result, we obtain relation (\ref{eqstep5}).
\item[{\Small{\sc Step 6:}}] The main part of this step is to prove that in relation (\ref{eqstep5}) from Step 5, there cannot be terms of $O(\phi)$. This implies that the curvatures of the domains, $\rho_f$ and $\rho_g$,  must satisfy a functional equation (\ref{eqstep6}).
\item[{\Small{\sc Step 7:}}] The final step of the proof consists in proving that  the functional equation from Step 6 forces  the curvatures to be the same (up to a rescaling). This concludes the proof of the theorem.\\
\end{itemize}

\begin{proof}
\underline{{\sc Step 1}}. Suppose that there exists $h:\T \times [0,\dt]\to \A$, which is
a $C^1$ conjugacy near the boundary, {\it i.e.} for some $\dt>0$
we have $f=h^{-1} {g} h$ for any $s\in \partial \Om_f$ and $0\le \phi\le \dt$.

Consider the corresponding equation on differentials:
\begin{eqnarray*}
 Df(s,\phi) &=& Dh^{-1}(g(h(s,\phi))) \cdot Dg(h(s,\phi)) \cdot Dh(s,\phi) =\\
&=&   Dh_+(s,\phi)^{-1}\cdot Dg(h(s,\phi)) \cdot Dh(s,\phi),
\end{eqnarray*}
where we denoted $Dh_+(s,\phi)=Dh(f(s,\phi))$. 
Rewrite
\begin{equation}\label{eqstep1}
D{f}-\Id
=Dh_+^{-1}\, (D{g} -\Id)\, Dh+\left(
Dh^{-1}_{+} - Dh^{-1} \right) \cdot Dh.
\end{equation}

\vspace{20 pt}

\noindent \underline{{\sc Step 2}}.
First, observe that
\begin{eqnarray*}
h(s,\phi) &=& h(s,0) + Dh(s,0) \left( \begin{array}{l} 0\\ \phi\end{array}\right) + O(\phi^{1+\a}) =\\
&=& \Big( \hat{s} + \partial_\phi h_1(s,0)\phi, \partial_\phi h_2(s,0)\phi\Big) + O(\phi^{1+\a}) =\\
&=:& \Big( \hat{s} + a(s) \phi, b(s) \phi\Big) + O(\phi^{1+\a}),
\end{eqnarray*}
where $\hs=\hs(s)$ is uniquely defined by the relation $h(s,0)=(\hs,0)$.

Decompose $Dh(s,\phi)$ as follows:
\[
Dh(s,\phi)=\Lb(s)+ \Lb_1(s,\phi),
\]
where $\Lb(s)=Dh(s,0)$ and
$\Lb_1(s,\phi)=Dh(s,\phi)-\Lb(s)$.
By Proposition \ref{propformbillmap} near
the boundary the billiard maps have the form
\[
Df(s,\phi)=L_f(s)+\phi A_f(s)+ O(\phi^2)
\qquad
Dg(s,\phi)=L_g(s)+\phi A_g(s)+ O(\phi^2).
\]
Therefore, by Proposition \ref{propformbillmap} we have
\be
Dg(h(s,\phi)) =  L_g\big(\hat{s} + a(s)\phi\big)
+ b(s) A_g(\hat{s}) \phi  +O(\phi^{1+\a}),
\ee
where $L_g$ and $A_g$ explicitly given.

Observe that $L_g$ is upper-triangular and that
$A_g$ has no identically zero entries unless
domain is the circle. Similarly for $Df$.
Write $Dh^{-1}(s,\phi)=\Lb^{-1}(s)+\Lb'_1(s,\phi)$.
Let us compute $\Lb'_1(s,\phi)$:
\[
\Id=[\Lb(s)+\Lb_1(s,\phi)]\cdot [\Lb^{-1}(s)+\Lb'_1(s,\phi)]
= \]
\[
=\Id+\Lb_1(s,\phi)\Lb^{-1}(s)+
[\Lb(s)+\Lb_1(s,\phi)]\cdot\Lb'_1(s,\phi).
\]
This implies
\begin{equation}\label{Lb1'}
 \Lb'_1=- (\Lb+\Lb_1)^{-1}\cdot \Lb_1 \cdot \Lb^{-1}.
\end{equation}
Observe that $\Lb_1$ and $\Lb'_1$ vanish as $\phi\to 0$. In particular, since
$Dh$ is $C^\al$ in $\phi$, we have
\[
\Lb_1=O(\phi^\al)\qquad \text{ and }\qquad \Lb'_1=O(\phi^\al).
\]

Let $\pi_s(s,\phi)=s$ be the natural projection.
Denote  $Dh_+(s,\phi) = \Lb_+(s) + \Lb_{1+}(s,\phi)$, where
$\Lb_+(s,\phi)=\Lb_+(\pi_s(f(s,\phi)))$,
$\Lb'_{1+}(s,\phi)=\Lb'_{1}(f(s,\phi))$.

\vspace{20 pt}

\noindent \underline{{\sc Step 3}}.
The conjugacy condition
$$Df(s,\phi)=Dh_+^{-1}(s,\phi) \cdot Dg (h(s,\phi)) \cdot Dh(s,\phi)$$
can be rewritten as follows:

{\small
\begin{eqnarray*}
D{f}(s,\phi)-\Id &=& Dh_+^{-1}(s,\phi) \cdot \big(D{g}(h(s,\phi)) -\Id\big) \cdot Dh(s,\phi)+
\\ &+& \left(
Dh^{-1}_{+}(s,\phi) - Dh^{-1}(s,\phi) \right) \cdot Dh(s,\phi) =\\
&=& Dh_+^{-1}(s,\phi) \cdot \big(D{g}(h(s,\phi)) -\Id\big)\cdot Dh(s,\phi)+\\
& +&  \left[(\Lb^{-1}_+ - \Lb^{-1})+
(\Lb'_{1+} - \Lb_{{1}}') \right] \cdot Dh(s,\phi).
\end{eqnarray*}
}

Using the notations above and the formula for $L_f$ we have:
\begin{eqnarray} \label{eqstep2}
L_f(s)-\Id+\phi A_f(s,\phi)& + & O(\phi^2) \nonumber \\
&=&
\Big[ (\Lb^{-1}_+ - \Lb^{-1})+
(\Lb'_{1+} - \Lb_{{1}}') \Big] \cdot (\Lb+\Lb_1)+\nonumber \\
&& \ + \
(\Lb_+^{-1}+\Lb'_{1+}) \cdot \big(L_g(\hs + a(s)\phi ) -\Id \big)\cdot
(\Lb+\Lb_1) +\nonumber \\
&& \ +\  b(s) \phi (\Lb_+^{-1}+\Lb'_{1+}) \cdot A_g(\hs) \cdot
(\Lb+\Lb_1) + O(\phi^2) = \nonumber \\
&=&
\Big[ (\Lb^{-1}_+ - \Lb^{-1})+
(\Lb'_{1+} - \Lb_{{1}}') \Big] \cdot (\Lb+\Lb_1)+\\
&& \ + \
(\Lb_+^{-1}+\Lb'_{1+}) \cdot \big(L_g(\hs) -\Id \big)\cdot
(\Lb+\Lb_1) +\nonumber \\
&& \ + \  a(s) \phi
(\Lb_+^{-1}+\Lb'_{1+}) \cdot D_sL_g(\hs)\cdot
(\Lb+\Lb_1) +\nonumber \\
&& \ +\  b(s) \phi (\Lb_+^{-1}+\Lb'_{1+}) \cdot A_g(\hs) \cdot
(\Lb+\Lb_1) + O(\phi^2). \nonumber
\end{eqnarray}

\vspace{20 pt}

\noindent \underline{{\sc Step 4}}.
Comparing the two sides of the above equality, we obtain (in the limit $\phi \rightarrow 0$):
\begin{eqnarray*}
L_f(s)-\Id= \Lb^{-1}(s)\cdot (L_g(\hs)-\Id) \cdot \Lb(s).
\end{eqnarray*}

This implies that $\Lb(s)$ is upper-triangular.
Denote its entries by $\lb_{11}(s), \lb_{12}(s),\lb_{22}(s)$
depending on a position in the matrix. The above equality
gives
\[
\dfrac{\lb_{22}(s)}{\lb_{11}(s)}=\dfrac{\rho_g(\hat s)}{\rho_f(s)} := \lb^{-3}.
\]

\vspace{20 pt}

\noindent \underline{{\sc Step 5}}.
Rewrite the remaining part as follows. We consider all terms that are not $o(\phi)$
and use the fact that $(\Lb_+^{-1} - \Lb^{-1})= o(1)$.
{\small
\begin{eqnarray}
\label{equality0}&& \phi \ (A_f (s)-  b(s) \Lb^{-1}(s) \cdot A_g(\hs) \cdot \Lb(s)) + o(\phi) =  \\
\label{equality1}&& \ \ = \ \Big[ (\Lb^{-1}_+ - \Lb^{-1}) + (\Lb'_{1+} - \Lb_{1}') \Big] \cdot (\Lb +\Lb_1) + \\
\label{equality2}&& \ \ + \   a(s) \phi\Big[ \Lb_+^{-1} \cdot D_sL_g(\hs)  \cdot \Lb \Big] + \\
\label{equality3}&& \ \ + \  \Lb'_{1+} \cdot (L_g-\Id) \cdot \Lb + \Lb^{-1}_+ \cdot (L_g-\Id) \cdot \Lb_1+ \\
\label{equality4}&& \ \ + \  \Lb'_{1+} \cdot (L_g-\Id) \cdot \Lb_1  +o(\phi).
\end{eqnarray}
}

We want now to analyze the left lower entries of the above matrices.

\begin{itemize}
\item Consider the left lower entry on  {\bf line (\ref{equality0})}.
By Proposition \ref{propformbillmap}  and formula
for ratio of $\lb_{11}(s)/\lb_{22}(s)$ we have

\begin{eqnarray*}
&& \phi\Big[ - \frac{2}{3} \dfrac{\rho'_f(s)}{\rho_f(s)} + b(s) \frac{\lb_{11}(s)}{\lb_{22}(s)}
\frac{2}{3} \dfrac{\rho'_g(\hs)}{\rho_g(\hs)} \Big] = \\
&& \ = \
- \frac{2}{3} \phi\Big[ \dfrac{\rho'_f(s)}{\rho_f(s)} - C^{-1} \lb
\dfrac{\rho_f(s)}{\rho_g(\hs)} \dfrac{\rho'_g(\hs)}{\rho_g(\hs)} \Big] = \\
&& \ = \
- \frac{2}{3} \rho_f(s) \phi\Big[ - \frac{d}{ds} \left(\dfrac{1}{\rho_f(s)}\right) + C^{-1} \lb
\frac{d}{d\hs} \left(\dfrac{1}{\rho_g(\hs)}\right) \Big] = \\
&& \ = \
\frac{2}{3}  \rho_f(s)  \phi\Big[ \frac{d}{ds} \left(\dfrac{1}{\rho_f(s)}\right) - C^{-1} \lb  \dfrac{ds}{d\hs}
\frac{d}{ds} \left(\dfrac{1}{\rho_g(\hs(s))}\right) \Big] = \\
&& \ = \
\frac{2}{3} \rho_f(s)  \phi\Big[ \frac{d}{ds} \left(\dfrac{1}{\rho_f(s)}\right) - C^{-1} \lb  C\lb^2
\frac{d}{ds} \left(\dfrac{1}{\rho_g(\hs(s))}\right) \Big] = \\
&& \ = \
\frac{2}{3}  \rho_f(s)  \phi\Big[ \frac{d}{ds} \left(\dfrac{1}{\rho_f(s)}\right) - \lb^3
\frac{d}{ds} \left(\dfrac{1}{\rho_g(\hs(s))}\right) \Big] = \\
&& \ = \
\frac{2}{3}  \rho_f(s)  \phi\Big[ \frac{d}{ds} \left(\dfrac{1}{\rho_f(s)}\right) - \dfrac{\rho_f(s)}{\rho_g(\hs(s))} \frac{d}{d s} \left(\dfrac{1}{\rho_g(\hs(s))}\right) \Big] = \\
&& \ = \
\frac{2}{3}  \rho^2_f(s)  \phi\Big[ \dfrac{1}{\rho_f(s)} \frac{d}{ds} \left(\dfrac{1}{\rho_f(s)}\right) - \dfrac{1}{\rho_g(\hs(s))} \frac{d}{ds} \left(\dfrac{1}{\rho_g(\hs(s))}\right) \Big] = \\
&& \ = \
\frac{1}{3}  \rho^2_f(s)  \phi   \frac{d}{ds} \Big[ \dfrac{1}{\rho^2_g(\hs(s))}
- \dfrac{1}{\rho^2_f(s)}
 \Big] = :  \Delta (s) \phi.
\end{eqnarray*}

Since domains are not similar, this function $\Delta(s)$ is $C^2$ smooth and
not identically zero. Therefore,
\begin{equation}\label{sigmatau}
\exists \ \sg,\ \tau>0 \quad \mbox{and} \ s^* \quad
\mbox{such that}  \ \Delta(s)>\sg \ \mbox{ for any} \  |s-s^*|<2\tau.
\end{equation}

\item Now we turn to {\bf line (\ref{equality1})}.
Denote entries of $\Lb_1(s,\phi)$ by $\lb^1_{ij}(s,\phi),\ i,j=1,2$ and
entries of $\Lb'_1(s,\phi)$ by $\lb'_{ij}(s,\phi),\ i,j=1,2$.
Since $\Lb_1(s,\phi)\to 0$ as $\phi\to 0$, for any $\eps>0$
there is $\kappa>0$ such that all entries of $\Lb_1(s,\phi)$
are at most $\eps$ in absolute value for any $s\in \T$
and any $|\phi|<\kappa$.
Compute the lower left entry of the sum of matrices on line (\ref{equality1}).
It can be decomposed into two terms.
\[
\Big[ \Lb^{-1}_+ - \Lb^{-1} \Big] \cdot (\Lb +\Lb_1)
\qquad \& \qquad
\Big[\Lb'_{1+} - \Lb_{{1}}'\Big] \cdot (\Lb +\Lb_1).
\]
The first term is easy to compute. We get
\[
 \big(\lb^{-1}_{22}(\pi_s f(s,\phi)) - \lb^{-1}_{22}(s) \big)
 \lb^1_{21}(s,\phi)= o(\phi).
\]
Computing the other one needs preliminary computations.
As we have already noticed above in (\ref{Lb1'}),
$\Lb_1'=-(\Lb+\Lb_1)^{-1}\cdot \Lb_1 \cdot \Lb^{-1}$.
The matrix $(\Lb+\Lb_1)^{-1}$ has the form:
\[
\dfrac{1}{D} \left( \beal \ \ \ \lb_{22}+\lb_{22}^1 \quad & \qquad
-(\lb_{12}+\lb_{12}^1) \\
\\   \qquad - \lb_{21}^1 \quad &
\qquad \ \ \lb_{11}+\lb_{11}^1 \enal \right),
\]
where $D=(\lb_{11}+\lb_{11}^1)(\lb_{22}+\lb_{22}^1)-\lb_{21}^1
(\lb_{12}+\lb_{12}^1)$ is the determinant.
The product $\Lb_1 \cdot \Lb^{-1}$ has the form
\[
\left( \beal\ \  \dfrac{\lb_{11}^1}{\lb_{11}} \quad & \qquad
- \dfrac{\lb_{11}^1\lb_{12}}{\lb_{11}\lb_{22}}+
\dfrac{\lb_{12}^1}{\lb_{22}}\ \\
\\  \  \ \dfrac{\lb_{21}^1}{\lb_{11}} \quad & \qquad
- \dfrac{\lb_{21}^1\lb_{12}}{\lb_{11}\lb_{22}}+
\dfrac{\lb_{22}^1}{\lb_{22}}\ \ \enal \right).
\]
Thus, the bottom left and right terms of the product
$\Lb_1'=-(\Lb+\Lb_1)^{-1}\cdot \Lb_1 \cdot \Lb^{-1}$
have the form
\[
- \dfrac{\lb_{21}^1}{D} \left(
\dfrac{\lb_{11}^1}{\lb_{11}} -
\dfrac{\lb_{11}+\lb_{11}^1}{\lb_{11}} \right)
=  \dfrac{\lb_{21}^1}{(\lb_{11}+\lb_{11}^1)(\lb_{22}+\lb_{22}^1)-\lb_{21}^1
(\lb_{12}+\lb_{12}^1)}
\]
\[
   +\dfrac{1}{D} \left[ \dfrac{-\lb^1_{21}(\lb_{12}+\lb_{12}^1)+\lb^1_{22}(\lb_{11}+\lb_{11}^1)}{\lb_{22}} \right]
=:\tau_2.
\]
{Rewrite the first expression in the form}
\[
\dfrac{\lb_{21}^1}{(\lb_{11}+\lb_{11}^1)\lb_{22}}-\qquad \qquad \qquad
\qquad \qquad \qquad \]
\[
-\lb_{21}^1\,\dfrac{\lb_{11}\lb_{22}^1+\lb_{22}^1\lb_{11}^1-
\lb_{21}^1(\lb_{12}+\lb_{12}^1)}
{(\lb_{11}+\lb^1_{11})\lb_{22}
\Big[(\lb_{11}+\lb_{11}^1)(\lb_{22}+\lb_{22}^1)-\lb_{21}^1
(\lb_{12}+\lb_{12}^1)\Big]}=:
\]
\[ \qquad \qquad \qquad
\qquad \qquad \qquad =:\tau_0(s,\phi)+\tau_1(s,\phi),
\]
where $\lb_{ii}$ is evaluated at $s$ and $\lb_{ij}^1$ at $(s,\phi)$
with $i,j=1,2$. All functions $\tau_j(s,\phi), j=0,1,2$  vanish
at $\phi=0$ and are $C^{\al}$-H\"older with exponent $\al>1/2$.

Composing $(\Lb_{1+}'-\Lb_1')\cdot (\Lb+\Lb_1)$ we get the following
lower left entry:
\[
\Big[\tau_0(f(s,\phi))+\tau_1(f(s,\phi))-\tau_0(s,\phi)-\tau_1(s,\phi)\Big]
(\lb_{11}(s)+\lb_{11}^1(s,\phi))+
\]
\[
+ \Big[\tau_2(f(s,\phi))-\tau_2(s,\phi)\Big] \lb_{21}^1(s,\phi).
\]
For small $\phi$ the billiard map has the form
\[
f(s,\phi)=\left(s+\dfrac{2\phi}{\rho_f(s)}+O(\phi^2),\phi+O(\phi^2)\right).
\]
H\"older regularity implies that
\begin{eqnarray}
\label{equality5}
&& (\tau_0(f(s,\phi))-\tau_0(s,\phi))
(\lb_{11}(s)+\lb_{11}^1(s,\phi))=O(\phi^\al)
\\
&& \label{equality6}
(\tau_1(f(s,\phi))-\tau_1(s,\phi))
(\lb_{11}(s)+\lb_{11}^1(s,\phi))=O(\phi^{2\al})=o(\phi) \\
&&
(\tau_2(f(s,\phi))-\tau_2(s,\phi)) \lb_{21}^1(s,\phi)=O(\phi^{2\al})=o(\phi),
\end{eqnarray}
where we have used that that $\lb_{ij}^1(s,\phi)=O(\phi^a)$.

Notice that we can rewrite (\ref{equality5}) in the form
\[
\left(\dfrac{\lb_{21}^1(f(s,\phi))}{\lb_{22}(\pi_s(f(s,\phi))}-
\dfrac{\lb_{21}^1(s,\phi)}{\lb_{22}(s)}\right)+O(\phi^{2\al}).
\]

\item Since $DL_g$ has also only one non-zero entry (the upper right),
then it follows easily that the lower left entry of the matrix
on {\bf line (\ref{equality2})} is also zero.\\

\item Since $\Lb(s)$ is upper triangular, so is $\Lb^{-1}(s)$.
By definition $(L_g-\Id)$ has only one non-zero entry:
the upper right. This implies that the lower left entry
of the sum of matrices on {\bf line (\ref{equality3})} is zero.\\

\item Clearly, the lower left
entry coming from {\bf line (\ref{equality4})} is $o(\phi)$.\\

\end{itemize}

Summarizing, from formulae (\ref{equality0}-\ref{equality4}) we obtain:
\begin{equation}\label{eqstep5}
\left(\dfrac{\lb^1_{21}}{\lb_{22}}(f(s,\phi)) -
\dfrac{\lb^1_{21}}{\lb_{22}}(s,\phi)\right) = \Delta(s) \phi + o(\phi).
\end{equation}

\vspace{20 pt}

\noindent \underline{{\sc Step 6}}.
Let $s=s^*$ and $\phi=1/n$ for a large enough $n\in \Z_+$. Denote
$[x]$ the integer part of $x$, {\it i.e.} $[x]=x-\{x\}$, and
$\pi_s(s,\phi)=s,\ \pi_\phi(s,\phi)=\phi$ the natural projections. Then
there exist constants $C,D>0$ independent of $n$ such that
\be \label{apriori-bound}
\pi_s f^{k}(s^*,\phi)-s^*<\tau, \quad
\frac{1}{Dn}<\pi_\phi f^{k}(s^*,\phi)<\frac{ D}{ n}
\quad \text{ for any }k\in [0,[\tau n/C]].
\ee
To see validity of these estimates switch to Lazutkin
coordinates, where we have (\ref{lazutkin-billiard-map}).
If $y=O(1/n)$, then
$$
f_L(x,y)=(x+y+O(1/n^3),y+O(1/n^4)).
$$
If we take $O(n)$ iterates, then $y$ (resp. $x$) component
can't change by more that $O(1/n^3)$ (resp. $O(1/n)$).
Then we can take the inverse map $L_f^{-1}$.

Combining all the estimates (\ref{equality0}-\ref{equality5})
we have that
\begin{equation}
\dfrac{\lb_{21}^1}{\lb_{22}}(f^{k+1}(s^*,\phi))=
\dfrac{\lb_{21}^1}{\lb_{22}}(f^k(s^*,\phi))
+ (\pi_\phi f^k(s^*,\phi))\,\Delta(\pi_s\ f^k(s^*,\phi))
+o(\phi).
\end{equation}

Thus, for each $k\in [0,[\tau n/C]]$ we have
\begin{eqnarray}
&& \dfrac{\lb_{21}^1}{\lb_{22}}(f^{[\tau n/C]}(s^*,\phi)) \ - \ \dfrac{\lb_{21}^1}{\lb_{22}}(s^*,\phi)\ = \nonumber\\
&&\qquad  =\ \sum_{k=0}^{[\tau n/C]-1}
\left((\pi_\phi f^k(s^*,\phi))\Delta(\pi_s\ f^k(s^*,\phi))
+o(\phi)\right).
\end{eqnarray}
Observe that $\lb_{11}(s)$ and $\lb_{22}(s)$ are strictly positive and
bounded from above by some $\gm>0$.
Thus using the above estimates and (\ref{sigmatau}), we conclude that the right-hand side of the above equality is bounded from below by
\[
\left[\frac{\tau n}{C}\right]
\ \frac{\phi}{D}\ \dfrac{\sigma}{\gm+2\eps}\ge \dfrac{\tau \sigma}{4CD \gm}>0.
\]
This implies that
\[
\dfrac{\lb_{21}^1}{\lb_{22}}
(f^{[\tau n]}(s^*,\phi)) - \dfrac{\lb_{21}^1}{\lb_{22}}((s^*,\phi))>\dfrac{\tau \sigma}{4DC \gm}>0.
\]
All the constants involved in this ratio are uniformly bounded.
This contradicts $\Lb_1\to 0$ as $\phi\to 0$.\\

\vspace{20 pt}

\noindent \underline{{\sc Step 7}}.
It follows from the above discussion that  $\Delta(s)\equiv 0$. Let us show that this implies that
$\Om_f$ and $\Om_g$ are similar. In fact

\begin{equation}\label{eqstep6}
\Delta(s)\equiv 0 \qquad \Longleftrightarrow \qquad \frac{d}{ds}\Big( \dfrac{1}{\rho_f^2(s)} \Big) \ = \
\frac{d}{ds}\Big( \dfrac{1}{\rho_g^2(\hs(s))}  \Big) \qquad \mbox{for all} \ s.
\end{equation}

Therefore, integrating we obtain that there exists $\a\in \R$ such that:
$$
 \rho_g^{-2}(\hs(s)) - \rho_f^{-2}(s) = \a,
$$
where $\a$ is a non-negative constant (otherwise just exchange the role of $f$ and $g$).

The above equality implies
$$
 \rho_g^{-1}(\hs(s)) = \sqrt{\a + \rho_f^{-2}(s)}.
$$

Integrating we obtain (up to rescaling one of the domain, we can assume that $C=1$):

\begin{eqnarray*}
2\pi &=& \int_0^{l_g} \rho_g (\hs) d\hs = \\
&=& \int_0^{l_f} \rho_g (\hs(s)) \frac{d\hs}{ds} ds =\\
&=& \int_0^{l_f} \rho_g (\hs(s)) \lb^{-2} ds =\\
&=& \int_0^{l_f} \rho_g^{5/3} (\hs(s)) \rho_f^{-2/3}(s) ds =\\
&=& \int_0^{l_f} \rho_f^{-2/3}(s) \left(\a + \rho_f^{-2}(s) \right)^{-5/6} ds =\\
&=& \int_0^{l_f} \rho_f^{-2/3}(s)  \rho_f^{5/3}(s) \left(1+ \a \rho_f^{2}(s) \right)^{-5/6} ds =\\
&=& \int_0^{l_f} \rho_f (s) \left(1+ \a \rho_f^{2}(s) \right)^{-5/6} ds =\\
&=& \int_0^{l_f} \rho_f (s) ds + \int_0^{l_f} \rho_f (s) \Big[\left(1+ \a \rho_f^{2}(s) \right)^{-5/6} -1\Big] ds =\\
&=& 2\pi + \int_0^{l_f} \rho_f (s)\Big[\left(1+ \a \rho_f^{2}(s) \right)^{-5/6} -1\Big] ds\,.
\end{eqnarray*}

Therefore
$$
\int_0^{l_f} \rho_f (s)\Big[\left(1+ \a \rho_f^{2}(s) \right)^{-5/6} -1\Big] ds = 0.
$$
Observe that
$
\Big[\left(1+ \a \rho_f^{2}(s) \right)^{-5/6} -1\Big] ds \leq 0
$
and
$\rho_f (s)>0$.
Hence it follows that  $\left(1+ \a \rho_f^{2}(s) \right)^{-5/6} \equiv 1$ and consequently $\a=0$.\\
\end{proof}

Now we prove that without the smoothness assumption, Question \ref{q2} would have a negative answer.

\begin{proposition}\label{C0conjugacy}
{\rm (i)} Let $\Om_f=\mathbb S^1$ be the circle  and $\Om_g$
be an ellipse of non-zero eccentricity. Then near the boundary
their billiard maps are $C^0$-conjugate.

{\rm (ii) }Similarly, if $\Om_f$ and $\Om_g$
are  ellipses of different eccentricities, then
their billiard maps are $C^0$-conjugate near the boundary.

{\rm (iii) } Let $\Om_f$ and $\Om_g$ be domains whose billiard maps
are integrable near the boundary. Then near the boundary
their billiard maps are $C^0$-conjugate.
\end{proposition}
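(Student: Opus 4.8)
\textbf{Proof proposal for Proposition \ref{C0conjugacy}.}

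The plan is to reduce all three statements to a single normal-form argument in Lazutkin-type coordinates. The key point is that near the boundary an integrable billiard carries a continuous family of caustics, and the restriction of the billiard map to each caustic (equivalently, to the corresponding invariant curve in the annulus $\A$) is conjugate to a rigid rotation by the rotation number $\omega$ of that curve. Thus, for an integrable billiard, a neighborhood of $\{\phi=0\}$ is foliated by invariant curves $\{\Gamma_f(\omega)\}_{\omega\in[0,\omega_0)}$, parametrized by their rotation number $\omega$, and on $\Gamma_f(\omega)$ the dynamics reads, in a suitable angular coordinate $\theta\in\T$, as $\theta\mapsto\theta+\omega$. The rotation number $\omega$ depends continuously (in fact monotonically, by the twist property — see the remark after \eqref{Lazutkincoord} and the form \eqref{lazutkin-billiard-map}) on the curve, and $\omega\to 0$ precisely as the curve approaches the boundary $\{\phi=0\}$, on which $f$ is the identity.

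First I would make this precise: for a billiard integrable near the boundary I would produce a continuous coordinate chart $\Psi_f:\T\times[0,\omega_0)\to \A$, $\Psi_f(\theta,\omega)=$ (point on $\Gamma_f(\omega)$ with angular coordinate $\theta$), such that $\Psi_f^{-1}\circ f\circ\Psi_f(\theta,\omega)=(\theta+\omega,\omega)$. Continuity of $\Psi_f$ up to $\omega=0$ requires a small argument: the invariant curves converge uniformly to the boundary as $\omega\to 0$ (Lazutkin coordinates give $\Gamma_f(\omega)=\{y=\omega+o(\omega)\}$ up to reparametrization, where $f_L$ is $O(y^3)$-close to $(x,y)\mapsto(x+y,y)$), and one normalizes the angular coordinate, e.g. so that $\theta$ is the Lazutkin $x$-coordinate on the boundary. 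Then $\Psi_f$ is a homeomorphism onto a one-sided neighborhood of $\{\phi=0\}$, continuous up to the boundary, where it restricts to $\theta\mapsto$ (arc-length, or Lazutkin coordinate).

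Granting this, all three claims follow immediately by setting $h:=\Psi_g\circ\Psi_f^{-1}$ on the overlap of the two parametrized neighborhoods. Indeed, if $y=\Psi_f(\theta,\omega)$ then
\[
h^{-1}\circ g\circ h(y)=\Psi_f\circ\Psi_g^{-1}\circ g\circ\Psi_g(\theta,\omega)=\Psi_f(\theta+\omega,\omega)=f(y),
\]
so $h$ conjugates $f$ to $g$ near the boundary, and $h$ is a homeomorphism as a composition of two homeomorphisms. This proves (iii); statements (i) and (ii) are the special cases in which $\Om_f,\Om_g$ are a circle and/or an ellipse, both of which are integrable near (in fact away from a segment through the foci, hence near) the boundary — for the circle the caustics are concentric circles with explicitly computable rotation numbers, for the ellipse the confocal ellipses, and in both cases $\omega_0$ can be taken to be any number less than the rotation number of the innermost caustic of the relevant foliated neighborhood. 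Note the conjugacy is genuinely only $C^0$: the map $\omega\mapsto$ (the caustic parameter, e.g. eccentricity-type parameter) is smooth, but matching angular coordinates of two different integrable systems along the whole family is in general not $C^1$, consistently with the Main Theorem.

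The main obstacle is the first step — establishing that $\Psi_f$ extends continuously to $\omega=0$, i.e. that the angular parametrizations of the invariant curves can be chosen to vary continuously all the way down to the degenerate curve $\{\phi=0\}$ on which the dynamics is the identity and the "rotation number" is $0$. This is where one must invoke the structure of \eqref{lazutkin-billiard-map}: in Lazutkin coordinates the conjugacy to rotation on each curve is built from the nearly-integrable normal form, and the uniform $O(y^3)$ control of $f_L$ near $y=0$ yields uniform (in $\omega$) estimates on the conjugating maps, giving the sought continuity. For the circle and the ellipse this step is unnecessary in its general form, since the conjugacy to rotation on each confocal caustic is classical and explicit (via the elliptic-coordinate / elliptic-integral parametrization), so (i) and (ii) can alternatively be proved by direct computation without appealing to the general normal form at all.
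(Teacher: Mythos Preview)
Your proposal is correct and follows essentially the same approach as the paper: both arguments match invariant curves by rotation number, use that on each such curve the (integrable) billiard is conjugate to the rigid rotation $\theta\mapsto\theta+\omega$, and build $h$ by composing the two resulting ``action--angle'' charts. You are in fact more explicit than the paper about the one genuine technical point---continuity of the chart $\Psi_f$ as $\omega\to 0$---and your indication that Lazutkin's normal form \eqref{lazutkin-billiard-map} (or, for (i)--(ii), the classical elliptic-integral parametrization) handles this is exactly right.
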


\begin{proof} We only  prove the first  claim. Claim (ii) follows from (i), and claim (iii) can be proven similarly, proving that an integrable billiard is $C^0$ conjugate to a circular one, close to the boundary.

Let us prove (i). The billiard map has an analytic first integral,
i.e. an analytic function $F(s,\phi)$ such that
$F(s,\phi)=F(f(s,\phi))$ for all $(s,\phi)\in \A$.
This first integral divides the annulus $\A$ into
two regions separated by two separatrices. We are
interested only in the outer region outside of
separatrices. The outer region is foliated by analytic
curves, which correspond to elliptic caustics.
Topologically the annulus has the form of
the phase space of the pendulum (see {\it e.g.}  Siburg \cite{Siburg}).

On each leave of this foliation orbits has the same
rotation number. This implies that conjugacy $h$ sends
a leave on $\Om_f$ with rotation number $\om$ to
a leave on $\Om_g$ of the same rotation number.
It turns out that one each leave there is a natural
parametrization so that billiard dynamics is
a rigid rotation by $\om$.


In the case of an ellipse there are explicit formulas.
Let the boundary curve be an ellipse, and parametrize it by
\[
\th \mapsto (x,y), \qquad \text{ where } x=h \cosh \mu_0 \cos \th,
\quad y=h \sinh \mu_0 \sin \th
\]
for $0\le \th <2\pi$. Let $s=s(\th)$ be arc-length parametrization
of the boundary with $s(0)=0$. Since it is smooth and non-degenerate
$\th=\th(s)$ is also well-defined. Then the billiard map has a first
integral
\[
I(\th,\phi)=\cosh^2 \mu_0 \cos^2 \phi + \cos^2 \th \sin^2 \phi.
\]
Namely, for any $(s,\phi)\in \A$ we have $f(s,\phi)=(s',\phi')$
and $I(\th(s),\phi)=I(\th(s'),\phi')$.

Parametrization on each caustic uses elliptic integrals and is fairly
cumbersome. Since we do not use it, we refer to
Tabanov \cite{Taba}.\\

\end{proof}


\appendix

\section{Extension problem and Lazutkin invariant KAM curves}\label{extension}

In this section we want to state an {\it Extension problem} and discuss its relation to both Birkhoff conjecture and Guillemin-Melrose's one.

\subsection{Cantor set conjugacy}

Let us start by reproducing a version of a famous result of Lazutkin \cite{Lazutkin}.
It says that for any sufficiently smooth strictly convex domain $\Om$ the billiard
map has a Cantor set of convex invariant curves (caustics) that accumulate to
the boundary $\partial \Om$.

\bthm \label{Kovachev-Popov}
{\rm (Kovachev-Popov \cite{KP} using P\"oschel \cite{Poschel})}
Let $\Om$ be a smooth strictly convex
domain of unitary boundary length. Then there is a symplectic change
of coordinates $\Phi_f:(s,\phi)\to (\th, I)$ preserving the boundary
$\{\phi=0\}  \longleftrightarrow \{I =0\}$ near the boundary
such that the billiard map
$F=\Phi_f\circ f \circ \Phi_f^{-1}: (\th, I) \to (\th',I')$
is generated by
\[
S(\th,I')=\th I'+ K(I')^{3/2}+R(\th,I'),
\]
\be
\left\{
\beal
I    & = & \partial_\th S & = & I'+\partial_{\th} R(\th,I')\qquad \qquad \qquad  \\
\th' & = & \partial_{I'} S   & = & \th +
\frac 32 K(I')^{1/2}\, K'(I')+ \partial_{I'} R,
\enal \right.
\ee
where $K\in C^\infty(\R,\R)$ with $K(0)=0,\ K'(0)>0,$ and
$R\in C^\infty(\R^2,\R)$ is $1$-periodic in the first variable.
Moreover, there exists a Cantor set $\mathcal C^{\eps_*}\subset [0,\eps_*]$
with $0\in \mathcal C^{\eps_*}$, where $\eps_*$ is some small number
such that $R\equiv 0$ on $\R \times \mathcal C^{\eps_*}$.

Moreover, for any $N\in \Z_+$ there is $C=C_N$ with the property
the set $\mathcal C^{\eps_*}$ can be chosen so
that for any $0<\eps<\eps_*$ we have
\[
\eps-\text{Lebesgue measure }(\mathcal C^{\eps_*})<C \eps^N.
\]
\ethm

\vspace{10 pt}

\begin{remark}
Since the perturbation term $R$ vanishes on $R \times \mathcal C^{\eps_*}$
with all its derivatives, then each curve $\R \times \{I\},\ I \in \mathcal C^{\eps_*}$
gives rise to an invariant curve for the billiard map.\\
\end{remark}

Denote now $\mathcal C^{\eps_*}_f=\Phi_f^{-1} \mathcal C^{\eps_*}$
(resp. $\mathcal C^{\eps_*}_g=\Phi_g^{-1} \mathcal C^{\eps_*}$)
if the corresponding billiard map is $f$ (resp. $g$).

\bcor \label{main-corollary}
Let $f$ and $g$ be $C^r$ smooth billiard maps corresponding,
respectively, to strictly convex planar domains  $\Om_f$ and $\Om_g$,
with $r\ge 2$. Suppose that $h$ is a $C^{1,\a}$ conjugacy
for the restriction $f|_{\mathcal C_f}$ and $g|_{\mathcal C_g}$
with $\a>1/2$,  {\it i.e.} there exists $\eps_*>0$ such that
$f=h^{-1} {g} h$ for any $(s,\phi)\in \mathcal C_f^{\eps_*}$.
Then the two domains are similar, that is they are the same up to
a rescaling and an isometry.
\ecor

As observed in Remark \ref{remmaithm}, the proof of Theorem \ref{maintheorem} works also in the setting of Corollary \ref{main-corollary}. It  suffices to be able
to approach any point on the boundary .\\

\subsection{Mather's $\beta$ and $\a$-functions}\label{beta}
Let $0<p<q,\ p,q\in\Z_+$ be a pair of integers. Consider periodic orbits
of period $q$ of the billiard map having $p$ turns about the ellipse.
See Figure \ref{circle-billiard}: the dard red has $q=3,\ p=1$ (or $4$
depending on the orientation), the violet has $q=5,\ p=1$ (or $4$),
the green has $q=6,$\ $p=1$ (or $5$). By Birkhoff's
theorem \cite{Birkhoff} (see also \cite{Siburg}) there are always at least two. Pick one among
all of them the one of maximal perimeter. Denote it by $\L(p,q)$.
Define $\beta(p/q)=-\L(p,q)/q$. This function is well-defined for
all rational numbers in $[0,1]$. Moreover, it is strictly convex
as it was shown by Mather. Thus, it can be extended to all of $[0,1]$.

For twist maps this function was introduced by Mather and it is usually called {\it $\beta$-function}. Let us consider its convex conjugate (known as {\it Mather's $\a$-function}):

\[
\a(c)= - \max_\om (c\cdot \om - \beta(\om)).
\]

Since $\beta$ is convex, this is well-defined.

It turns out that
\[
\a(1+I)=K(I)^{3/2}.
\]
Siburg \cite{Siburg} shows that for a strictly convex $\Om$
with Lazutkin perimeter $C_\Om$ we have
\[\beta(\omega)= -\omega +\frac{C_\Om^3}{24} \om^3 + O(\om^5)
\]
Then:
$$
I(\omega) =  \partial \beta(\omega) =  \frac{C_\Om^3}{8} \om^2 + O(\om^4).
$$
Therefore
$$
\omega(I) =  2\sqrt{2} C_\Om^{-3/2}I^{1/2} + O(I).
$$
This implies the following expression for $\alpha$:
\begin{eqnarray*}
\alpha(1+I) &=& \omega(I)\cdot (1+I) - \beta(\omega(I)) = \\
&=& 2\sqrt{2} C_\Om^{-3/2}I^{3/2} +  O(I^2) - \frac{C_\Om^3}{24} \Big(2\sqrt{2} C_\Om^{-3/2}I^{1/2} + O(I)\Big)^3 + O(I^2) =\\
&=& 2\sqrt{2} C_\Om^{-3/2}I^{3/2}    - \frac{C_\Om^3}{24} \Big(8\sqrt{8} C_\Om^{-9/2}I^{3/2}\Big) + O(I^{5/2}) =\\
&=& \frac{4\sqrt{2}}{3} C_\Om^{-3/2}I^{3/2}   + O(I^{5/2}).\\
\end{eqnarray*}
Therefore:
\begin{eqnarray*}
K(I) &=& \big( \alpha(1+I)\big)^{2/3} = \Big(\frac{4\sqrt{2}}{3}\Big)^{2/3} C_\Om^{-1}I   + O(I^{2}).
\end{eqnarray*}

This implies that locally the corresponding twist map from
Theorem \ref{Kovachev-Popov} without the remainder term $R$
has the form
\be
\left\{
\beal
I'   & = &  I+O(I^{5/2})\qquad \qquad \\
\th' & = & \th + c_\Om I^{1/2} + O(I^{3/2}),
\enal \right.
\ee
where $c_\Om=2\sqrt{2}C_\Om^{-3/2}$. Here is the derivation of the second line:
\begin{eqnarray*}
\theta' &=& \theta + \frac{3}{2}K(I')^{1/2} K'(I')
 + O(I^{3/2}) = \\
&=& \theta +   \frac{3}{2} \Big[ \Big(\frac{4\sqrt{2}}{3}\Big)^{2/3} C_\Om^{-1}I   + O(I^{2})\Big]^{1/2} \Big[
\Big(\frac{4\sqrt{2}}{3}\Big)^{2/3} C_\Om^{-1}   + O(I) \Big] = \\
&=& \theta + 2\sqrt{2}C_\Om^{-3/2} I^{1/2} + O(I^{3/2}).
\end{eqnarray*}

Another way to see this formula is using Lazutkin coordinates
(\ref{Lazutkincoord}) and the form of the billiard map
in these coordinates (\ref{lazutkin-billiard-map}). Then notice
that this map preserves a {\it non--standard area form}:
$\dfrac{C_\Om^3}{8}dx\,dy^2$ (see for instance  \cite[Theorem 3.2.5]{Siburg}).
Letting $I=\dfrac{C_\Om^3}{8} y^2,\ x=\th$
we get the above expression.

\vskip 0.2in

\subsection{Extension Problem}
We can now state our {\it Extension Problem}.\\

\noindent {\bf Extension problem.}\ {\it Suppose we have two twist maps of
the form whose $\a$-functions coincide. Then corresponding
twist maps restricted to the Cantor set $\mathcal C$ are
$C^{1,\a}$-conjugate near the boundary for some $a>1/2$.}\\

\vskip 0.1in

Notice that the Cantor set $\mathcal C$ is the same for both
maps, because they have the same function $K$.\\

As remarked in the Introduction, a positive answer to this problem would allow to give a positive answer to Question \ref{q3} (Guillemin-Melrose' conjecture) as well as to Birkhoff's conjecture (see Conjecture \ref{conj}).


\section{Proof of Proposition \ref{propformbillmap}} \label{appendix}

\begin{proof}
If the boundary of the billiard is $C^k$, then the billiard map is at
least $C^{k-1}$ (see for instance \cite[Theorem 4.2 in Part V]{KatokStrelcyn}).
Moreover  it is possible to show (see \cite[Theorem 4.2 in Part V]{KatokStrelcyn}) that:

\begin{equation}\label{formulae}
\left\{\begin{array}{ll}
\partial_ss' = \dfrac{\rho(s)\ell(s,s') - \sin \phi}{\sin \phi'}
&
\partial_\phi s' = \dfrac{\ell(s,s')}{\sin \phi'}\\
&\\
\partial_s\phi' = \dfrac{\rho(s)\rho(s')\ell(s,s') -  \rho(s)\sin\phi' - \rho(s') \sin \phi} {\sin \phi'}
&
\partial_\phi\phi' = \dfrac{\rho(s')\ell(s,s') - \sin \phi'}{\sin \phi'}
\end{array}\right.
\end{equation}
where $\ell(s,s')$ denotes the distance in
the plane between the points on the boundary of
the billiards corresponding to $s$ and $s'$.\\

In particular, it is easy to check that  $\partial_ss'(s,0)=1$. In fact the billiard map
coincides with the identity on the boundary $\{\f=0\}$, {\it i.e.} $s'(s,0)=s$ for all $s$.
Similarly, $\partial_s \f'(s,0) \equiv 0$.\\

Let us prove now that  $\partial_\f s'(s,0)=\dfrac{2}{\rho(s)}$. In fact, applying de
l'H\^opital formula, using formulae
(\ref{formulae}) and noticing that
$\partial_{s'}\ell(s,s')= \cos \phi'$, we obtain:
\begin{eqnarray}\label{limitL}
L &:=& \lim_{\phi\rightarrow 0^+} \dfrac{\ell(s,s')}{\sin \phi'} \;=\;
\lim_{\phi\rightarrow 0^+} \dfrac{\partial_{s'}\ell(s,s') \partial_{\phi}s'}{\cos \phi' \partial_\phi \phi'}  =\nonumber\\
&=& \lim_{\phi\rightarrow 0^+} \dfrac{\partial_{\phi}s'}{\partial_\phi \phi'}  \;=\;
\lim_{\phi\rightarrow 0^+} \dfrac{ \dfrac{\ell(s,s')}{\sin \phi'}}
{\dfrac{\rho(s')\ell(s,s') - \sin \phi'}{\sin \phi'}}  =\nonumber\\
&=& \dfrac{L}{\rho(s) L - 1}\,.
\end{eqnarray}

It follows from the fact the curve is strictly convex and \cite[Theorem 4.3 in Part V]{KatokStrelcyn} that $L<+\infty$. We want to show that $L>0$. In fact, let us consider the osculating circle at point $\g(s') \in \partial \Omega$ with radius $\frac{1}{\rho(s')}$. Elementary geometry shows that
$\ell(s,s') \geq  \dfrac{2}{\rho(s')}\sin \phi'$. Therefore:
$$
L \geq \dfrac{2}{\max_{s\in[0,l]} \rho(s)} >0.
$$

This fact and (\ref{limitL}) allow us to conclude that
$$
L = \dfrac{L}{\rho(s) L - 1} \quad \Longleftrightarrow \quad \rho(s) L - 1 = 1
\quad \Longleftrightarrow \quad L = \dfrac{2}{\rho(s)}\,.
$$

Furthermore, $\partial_\f \f'(s,0) = 1$. In fact:
	\begin{eqnarray*}
	\partial_\f \f' (s,\f) &=& \dfrac{\rho(s')\ell(s,s') - \sin \phi'}{\sin \phi'} =\\
	&=&  \big(\rho(s) + O(\phi)\big) \dfrac{\ell(s,s')}{\sin \f'} - 1=\\
	&=& \big(\rho(s) + O(\phi)\big) \Big(\frac{2}{\rho(s)} + O(\f)\Big) - 1 =\\
	&=& 1 + O(\f) \stackrel{\f \rightarrow 0^+}{\longrightarrow} 1.
	\end{eqnarray*}

To complete the proof, we need to compute explicitly the higher-order terms.

\begin{itemize}
\item Observe that
\begin{eqnarray*}
&& \partial^2_{\f \f}\f'(s,\f) \ = \ \frac{\partial}{\partial\f} \left( \frac{ \rho(s')\ell(s,s') }{ \sin \f'} - 1 \right) =\\
&& \qquad = \ \frac{\big[\rho'(s')\sf \ell(s,s') + \rho(s')\partial_{s'}\ell(s,s')\sf \big] \sin \f' - \rho(s')\ell(s,s') \cos \f' \ff }{\sin^2 \f'} =\\
&& \qquad = \ \rho'(s')\sf \frac{\ell(s,s')}{\sin \f'} +
\rho(s') \sf \frac{\partial_{s'}\ell(s,s')}{\sin \f'}
 - \rho(s') \ff  \frac{\ell(s,s')}{\sin \f'}  \frac{\cos \f'}{\sin \f'} =\\
&& \qquad = \ 4  \frac{\rho'(s)}{\rho^2(s)} +
O(\f)-
\rho(s') \frac{\ell(s,s')}{\sin \f'}  \frac{\cos \f'}{\sin \f'} \Big( \partial^2_{\f\f}\f'(s,0)\f + O(\f^2) \Big) =\\
&& \qquad = \ 4  \frac{\rho'(s)}{\rho^2(s)} -
\Big(\rho(s) + O(\f)\Big) \Big(\frac{2}{\rho(s)}+O(\f)\Big)  \dfrac{1 + O(\f^2)}{\f + O(\f^3)} \Big( \partial^2_{\f\f}\f'(s,0)\f + O(\f^2) \Big) = \\
&& \qquad = \ 4  \frac{\rho'(s)}{\rho^2(s)} - 2 \partial^2_{\f\f}\f'(s,0) + O(\f)\,.
 \end{eqnarray*}
Therefore,
$$
\partial^2_{\f\f}\f'(s,0) = \frac{4}{3} \frac{\rho'(s)}{\rho^2(s)}.
$$

\item
Similarly:
\begin{eqnarray*}
\partial^2_{\f \f}s'(s,\f) &=& \frac{\partial}{\partial \f} \left( \frac{ \ell(s,s')}{ \sin \f'}\right) =\\
&=&
\frac{\partial_{s'}\ell(s,s')\sf \sin \f' - \ell(s,s') \cos \f' \ff }{\sin^2 \f'} =\\
&=&
\frac{\cos \f' }{\sin \f'} \sf   -
\frac{\ell(s,s')}{\sin \f'}  \frac{\cos \f'}{\sin \f'}  \ff=\\
&=&  - \frac{\ell(s,s')}{\sin \f'}  \frac{\cos \f'}{\sin \f'}  \Big[ \frac{4}{3} \frac{\rho'(s)}{\rho^2(s)} \f + O(\f^2)  \Big] =\\
&=& - \frac{8}{3} \frac{\rho'(s)}{\rho^3(s)} + O(\f).
\end{eqnarray*}

\item
\begin{eqnarray*}
&& \partial^2_{s\f} s'(s,\f) \ = \  \frac{\partial }{\partial\f} \left(  \dfrac{\rho(s)\ell(s,s') - \sin \phi}{\sin \phi'}        \right) =\\
&& \qquad = \ \dfrac{\Big(\rho(s) \partial_{s'}\ell(s,s') \partial_\f s'    - \cos \phi\Big) \sin \f' - \Big(\rho(s)\ell(s,s') - \sin \phi\Big) \cos \f' \partial_\f \f'}
{\sin^2 \phi'} =\\
&& \qquad = \
\dfrac{\rho(s) \cos \f' \ell(s,s')     - \cos \phi \sin \f' - \cos \f' \Big(\rho(s)\ell(s,s') - \sin \phi\Big)  \Big( 1 + \frac{4}{3} \frac{\rho'(s)}{\rho^2(s)}\f + O(\phi^2) \Big)}
{\sin^2 \phi'} =\\
&& \qquad = \
\dfrac{ - \cos \phi \sin \f' +  \cos \phi' \sin \f  - \cos \phi'  \frac{4}{3} \frac{\rho'(s)}{\rho(s)}\f  \Big(\rho(s)\ell(s,s') - \sin \f   \Big) + O(\f^3)}
{\sin^2 \phi'} =\\
&& \qquad = \ \dfrac{ - \cos \phi  +  \cos \phi' \frac{\sin \f}{\sin \f'}  - \cos \phi'  \frac{4}{3} \frac{\rho'(s)}{\rho(s)}\f  \Big(\rho(s)\frac{\ell(s,s')}{\sin \f'} - \frac{\sin \f}{\sin \f'} \Big)+ O(\f^2)}
{\sin \phi'} =\\
&& \qquad = \ \dfrac{  O(\f^2)    - \big(1 + O(\f^2)\big) \frac{4}{3} \frac{\rho'(s)}{\rho(s)}\f  \Big[\rho(s) \big(\frac{2}{\rho(s)}+ O(\f) \big) - 1 + O(\f^2) \Big]}
{\f+O(\f^3)} =\\
&& \qquad = \ -\frac{4}{3} \frac{\rho'(s)}{\rho(s)} + O(\f),
\end{eqnarray*}

where we used that
$$
\frac{\sin \f}{\sin\f'} = \frac{\f + O(\f^3)}{\f' + O({\f'}^3)} = \frac{\f + O(\f^3)}{\f + O({\f}^3)} = 1 + O(\f^2).
$$

Therefore,
$$
\partial^2_{s\f} s'(s,0)  = -\frac{4}{3} \frac{\rho'(s)}{\rho(s)}.
$$

\item Finally, using the above expressions we obtain:
\begin{eqnarray*}
&& \partial_s\phi' (s,\f) \ = \ \dfrac{\rho(s)\rho(s')\ell(s,s') -  \rho(s)\sin\phi' - \rho(s') \sin \phi} {\sin \phi'} =\\
&& \quad =\ \rho(s)\rho\big(s + \frac{2}{\rho(s)}\f + O(\f^2)\big) \frac{\ell(s,s')}{\sin \f'} -  \rho(s) - \rho\big(s + \frac{2}{\rho(s)}\f + O(\f^2)\big) \frac{\sin \phi}{\sin \phi'} =\\
&& \quad =\ \rho(s) \Big[\rho(s) + \rho'(s) \frac{2}{\rho(s)}\f + O(\f^2)\Big] \partial_\f s'(s,\f) -  \rho(s) -  \\&& \qquad \ \  - \Big[\rho(s) + \rho'(s) \frac{2}{\rho(s)}\f + O(\f^2)\Big]  \Big(1+O(\f^2)\Big) =\\
&& \quad =\ \rho(s) \Big[\rho(s) + \rho'(s) \frac{2}{\rho(s)}\f + O(\f^2)\Big] \Big( \frac{2}{\rho(s)}  -   \frac{8}{3} \frac{\rho'(s)}{\rho^3(s)} \phi +
O(\phi^2) \Big) -  \rho(s) - \\
&& \qquad \ \  - \Big[\rho(s) + \rho'(s) \frac{2}{\rho(s)}\f + O(\f^2)\Big]  \Big(1+O(\f^2)\Big) =\\
&& \quad =\  \frac{2 \rho'(s)}{\rho(s)}\f - \frac{8}{3} \frac{\rho'(s)}{\rho (s)}\f + O(\f^2) =\\
&& \quad =\ - \frac{2}{3} \frac{\rho'(s)}{\rho (s)} \f + O(\f^2).
 \end{eqnarray*}
\end{itemize}

\end{proof}


\section{On Lazutkin coordinates}\label{appLaz}

As we have recalled in Section \ref{sec2}, in \cite{Lazutkin}  Lazutkin introduced a very special
change of coordinates that reduces the billiard map $f$ to a very simple form.

Let $L_f:  {[0,l]\times [0,\pi] \to  \T \times [0,\dt]}$  with small $\dt>0$ be given by
\begin{equation}\label{Lazutkincoord}
L_f(s,\phi)=\left(x=C^{-1}_f \int_0^s \rho^{-2/3}(s)ds,\qquad
y=4C_f^{-1}\rho^{1/3}(s)\ \sin \phi/2 \right),
\end{equation}
where $C_f := \int_0^l \rho^{-2/3} (s)ds$ is the so-called {\it Lazutkin perimeter}.

In these new coordinates the billiard map becomes very simple (see \cite{Lazutkin}):

\be \label{lazutkin-billiard-map}
f_L(x,y) = \Big( x+y +O(y^3),y + O(y^4) \Big)
\ee

In particular, near the boundary $\{\phi=0\} = \{y=0\}$, the billiard map $f_L$ reduces to
a small perturbation of the integrable map $(x,y)\longmapsto (x+y,y)$.\\

Let us now consider two strictly convex $C^4$ domains $\Om_f$ and $\Om_g$
and, as in the hypothesis of our Theorem \ref{maintheorem}, suppose that there exists a $C^1$ conjugacy $h$ in a neighborhood of the boundary, {\it i.e.} for some $\dt>0$
we have $f=h^{-1} {g} h$ for any $s\in \partial \Om_f$ and $0\le \phi\le \dt$.\\

Let $L_f$ and $L_g$ denote the Lazutkin change of coordinates for the billiard maps $f$ and $g$, $f_L$ and $g_L$ the corresponding maps in Lazutkin coordinates and  by $C_f$ and $C_g$ the respective Lazutkin perimeters.  We can summarise everything in the following commutative diagram:

\begin{equation}\label{diagram}
\xymatrix{
\T\times [0,\delta_f]   \ar@{->}[ddd]_{h_L}
\ar@{->}[rrr]^{f_L}    &&&
\T\times [0,\delta_f] \ar@{->}[ddd]^{h_L} \\
& \bar{\A}_f \ar@{->}[ul]^{L_f}  \ar@{->}[d]_{h}
\ar@{->}[r]^{f}    & \bar{\A}_f  \ar@{->}[ur]_{L_f}  \ar@ {->}[d]^{h} &\\
& \bar{\A}_g   \ar@{->}[dl]_{L_g}   \ar@{->}[r]_{g}
& \bar{\A}_g  \ar@{->}[dr]^{L_g} &\\
\T\times [0,\delta_g]  \ar@{->}[rrr]_{g_L}
&&& \T\times [0,\delta_g]\\
}
 \end{equation}

Let $h_L$ be the corresponding conjugacy between the billiard maps $f_L$ and $g_L$ in Lazutkin coordinates, {\it i.e.}
$h _L^{-1} \circ g_L \circ h_L = f_L$. \\

Morever, let $C:= C_fC_g^{-1}$ and $\lambda(s):= \dfrac{\rho_f^{1/3}(s)}{\rho_g^{1/3}(s)}$. We can prove the following lemmata, that have been used in the proof of our main results.\\

\begin{lemma}
If $\hat{s}(s)$ is defined by the relation $h(s,0)=(\hat{s},0)$, then:
$$\dfrac{d\hat{s}(s)}{ds} = C^{-1} \lb^{-2}.$$
\end{lemma}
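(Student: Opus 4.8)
The plan is to exploit the fact that the conjugacy $h$ is designed to intertwine the billiard maps in Lazutkin coordinates, so the natural route is to pass through the commutative diagram (\ref{diagram}). On the boundary $\{\phi=0\}=\{y=0\}$, the Lazutkin conjugacy $h_L$ restricts to a circle diffeomorphism of $\T$ which conjugates $f_L|_{y=0}$ to $g_L|_{y=0}$; but on $\{y=0\}$ both $f_L$ and $g_L$ are the identity (since $f$ and $g$ fix the boundary pointwise), so this gives no information directly. Instead I would differentiate the conjugacy relation $h_L^{-1}\circ g_L\circ h_L=f_L$ in the transverse ($y$) direction and evaluate at $y=0$, or — more transparently — simply track how the three composed changes of variable $L_f$, $h$, $L_g$ act on the relevant tangent data along the boundary.

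First I would write $h_L = L_g\circ h\circ L_f^{-1}$ and restrict attention to the boundary circle. Parametrize a point on $\{\phi=0\}\subset \bar\A_f$ by its arc-length coordinate $s$; its Lazutkin $x$-coordinate is $x_f(s)=C_f^{-1}\int_0^s\rho_f^{-2/3}$, and similarly $x_g(\hat s)=C_g^{-1}\int_0^{\hat s}\rho_g^{-2/3}$ for the image point $h(s,0)=(\hat s(s),0)$. Because the conjugacy must send invariant curves of $f_L$ to invariant curves of $g_L$ with the same rotation number (and in particular respects the dynamics accumulating on the boundary), $h_L$ restricted to the boundary is forced to be the identity map of $\T$ — this is the key structural input, coming from the normal form of Theorem \ref{Kovachev-Popov}, or alternatively from matching rotation numbers of the Cantor family of caustics. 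Hence $x_g(\hat s(s)) = x_f(s)$ for all $s$.

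Differentiating $x_g(\hat s(s))=x_f(s)$ with respect to $s$ gives
\[
C_g^{-1}\,\rho_g^{-2/3}(\hat s)\,\frac{d\hat s}{ds} \;=\; C_f^{-1}\,\rho_f^{-2/3}(s),
\]
so that
\[
\frac{d\hat s}{ds} \;=\; \frac{C_g}{C_f}\,\frac{\rho_g^{2/3}(\hat s)}{\rho_f^{2/3}(s)}
\;=\; C^{-1}\left(\frac{\rho_g^{1/3}(\hat s)}{\rho_f^{1/3}(s)}\right)^{2}.
\]
It remains only to identify the bracketed quantity with $\lambda^{-1}$; but $\lambda(s)=\rho_f^{1/3}(s)/\rho_g^{1/3}(s)$, and one checks (this is precisely the content of the companion lemma, or of Step 4 of the main proof, where $\lambda^{-3}=\rho_g(\hat s)/\rho_f(s)$) that the argument of $\rho_g^{1/3}$ may be taken at $\hat s(s)$, giving $\rho_g^{1/3}(\hat s)/\rho_f^{1/3}(s)=\lambda(s)^{-1}$. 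Therefore $\tfrac{d\hat s}{ds}=C^{-1}\lambda^{-2}$, as claimed.

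The main obstacle is justifying rigorously that $h_L$ fixes the boundary circle pointwise, i.e. that the conjugacy does not introduce a nontrivial reparametrization of $\T$ at $y=0$. This is where one must invoke the normal form from Theorem \ref{Kovachev-Popov} together with the Cantor set of invariant curves accumulating on the boundary: on each such curve $h_L$ conjugates a rigid rotation to a rigid rotation of the same rotation number, which pins down $h_L$ on the curve up to a rotation, and letting the curves accumulate to $y=0$ forces $h_L|_{y=0}$ to be a rotation; the rotation is then seen to be trivial by comparing, for instance, that both $f_L$ and $g_L$ preserve the labeling of points consistently with arc-length in the limit, or simply because one has the freedom to normalize $h$ by post-composing with a boundary rotation without loss of generality. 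Once this is in place, the rest is the short computation above.
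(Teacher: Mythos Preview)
Your argument is correct and follows essentially the same route as the paper's: both use $h_L=L_g\circ h\circ L_f^{-1}$ and the fact that $h_L(x,0)=(x,0)$ to reduce the claim to computing $\big(DL_g(\hat s,0)\big)^{-1}DL_f(s,0)$ on the first coordinate, which you carry out by differentiating the scalar identity $x_g(\hat s(s))=x_f(s)$ while the paper writes the same computation in matrix form. Your discussion of \emph{why} $h_L$ restricts to the identity on $\{y=0\}$ is more careful than the paper's, which simply asserts it (with only the normalization $h(0,0)=(0,0)$ as justification); in fact the cleanest way to see it is to combine the companion lemma (that $Dh_L(x,0)$ has $1$'s on the diagonal, forcing $h_L|_{y=0}$ to be a translation) with the normalization at $0$, rather than the rotation-number argument you sketch.
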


\begin{proof}
Using (\ref{Lazutkincoord}) and the fact that $h_L(x,0)=(x,0)$
(zero points on both domains are marked so that $h(0,0)=(0,0)$), we obtain:

\begin{eqnarray}\label{hats}
\big(\hs(s+\dt),\ 0\big) &=& L_g^{-1}\Big( h_L \big(L_f(s+\dt,0)\big)\Big) \ = \nonumber\\
&=& L_g^{-1}\big(L_f(s+\dt,0)\big) \ = \nonumber\\
&=& L_g^{-1}\big(L_f(s,0)\big)  + DL_g^{-1}\big(L_f(s,0)\big) DL_f(s,0) \ \left(\begin{array}{l} \dt \\0\end{array}\right) \ = \nonumber\\
&=& \big(\hs,0\big)  + \big[DL_g(\hs, 0)\big]^{-1} DL_f(s,0) \ \left(\begin{array}{l} \dt \\0\end{array}\right) \ = \nonumber\\
&=& \big(\hs,0\big)  +
{\tiny \left( \begin{array}{ll}
C_g \rho_g^{2/3} & 0 \nonumber\\
0 &   \dfrac{1}{2}C_g\ \rho_g^{-1/3}
\end{array}
\right)
\left( \begin{array}{ll}
C_f^{-1} \rho_f^{-2/3} & 0 \\
0 &   2C_f^{-1}\ \rho_f^{1/3}
\end{array}
\right)
\left(\begin{array}{l} \dt \\0\end{array}\right)}  = \nonumber\\
&=& \big(\hs + C^{-1} \lb^{-2}\dt, \ 0). \nonumber
\end{eqnarray}
\end{proof}

\vspace{10 pt}

Let us now compute $Dh_L(x,0)$.\\

\begin{lemma}
$$
Dh_L(x,0) =
\left(\begin{array}{ll}
1 & * \\
0 & 1
\end{array} \right)
$$
\end{lemma}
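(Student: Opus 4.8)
The plan is to compute $Dh_L(x,0)$ by relating it to $Dh(s,0)$ via the commutative diagram (\ref{diagram}), using the fact that the Lazutkin changes of coordinates $L_f$ and $L_g$ are, on the boundary $\{\phi=0\}=\{y=0\}$, diagonal (with entries computed from $\rho_f$, $\rho_g$, $C_f$, $C_g$ as in the previous lemma), together with the fact established in Step 4 of the proof of Theorem \ref{maintheorem} that $Dh(s,0)=\Lambda(s)$ is upper triangular. Concretely, from $h_L = L_g\circ h\circ L_f^{-1}$ restricted near the boundary I would differentiate at a point $(x,0)$ with $x$ corresponding to arc-length $s$, obtaining
$$
Dh_L(x,0) = DL_g(\hat s,0)\cdot Dh(s,0)\cdot \big[DL_f(s,0)\big]^{-1}.
$$
Since $Dh(s,0)$ is upper triangular and both $DL_g(\hat s,0)$ and $DL_f(s,0)^{-1}$ are diagonal (hence upper triangular), the product is upper triangular, so the lower-left entry of $Dh_L(x,0)$ vanishes. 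This already gives the shape $\left(\begin{smallmatrix} * & * \\ 0 & * \end{smallmatrix}\right)$.

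It then remains to show that both diagonal entries of $Dh_L(x,0)$ equal $1$. For the $(1,1)$ entry, this is exactly the content of the previous lemma: the $x$-component of $h_L$ on the boundary is $\hat x(x)$, and differentiating the relation between $x$, $s$, $\hat x$, $\hat s$ (via $dx = C_f^{-1}\rho_f^{-2/3}ds$, $d\hat x = C_g^{-1}\rho_g^{-2/3}d\hat s$, and $d\hat s/ds = C^{-1}\lambda^{-2}$) gives $d\hat x/dx = 1$. For the $(2,2)$ entry, I would use the explicit diagonal entries: $DL_f(s,0)$ has lower-right entry $2C_f^{-1}\rho_f^{1/3}(s)$ and $DL_g(\hat s,0)$ has lower-right entry $2C_g^{-1}\rho_g^{1/3}(\hat s)$, while the lower-right entry of $Dh(s,0)=\Lambda(s)$ is $\lambda_{22}(s)$. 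By Step 4 of the proof of Theorem \ref{maintheorem} we have $\lambda_{22}/\lambda_{11} = \rho_g(\hat s)/\rho_f(s) = \lambda(s)^{-3}$ (adopting the notation there, where $\lambda$ plays the role of the scaling factor), but more usefully we know $\lambda_{11}(s)=1$ from the $(1,1)$ computation — indeed the $(1,1)$ entry of $Dh_L$ is $C_g\rho_g^{2/3}(\hat s)\cdot\lambda_{11}(s)\cdot C_f^{-1}\rho_f^{-2/3}(s)$, which we have just shown equals $1$, forcing $\lambda_{11}(s) = C^{-1}\rho_f^{2/3}(s)\rho_g^{-2/3}(\hat s)$. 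Substituting into the $(2,2)$ entry of $Dh_L(x,0)$, namely $C_g\rho_g^{1/3}(\hat s)\cdot\lambda_{22}(s)\cdot \tfrac12\cdot 2 C_f^{-1}\rho_f^{-1/3}(s)$, wait — carefully, $2C_g^{-1}\rho_g^{1/3}(\hat s)$ times $\lambda_{22}(s)$ times $\big(2C_f^{-1}\rho_f^{1/3}(s)\big)^{-1} = C \,\rho_g^{1/3}(\hat s)\rho_f^{-1/3}(s)\,\lambda_{22}(s)$, and using $\lambda_{22} = \lambda_{11}\cdot\rho_g(\hat s)/\rho_f(s)$ together with the value of $\lambda_{11}$ just found, this collapses to $1$.

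The only genuine subtlety — and the step I expect to require the most care — is bookkeeping the normalization constants $C_f$, $C_g$, $C=C_fC_g^{-1}$ and the function $\lambda(s)=(\rho_f/\rho_g)^{1/3}$ consistently with the conventions of the preceding lemma and of Step 4, since a sign or exponent slip there would scramble the cancellation; everything else is routine matrix multiplication of upper-triangular $2\times2$ matrices. An alternative, perhaps cleaner, route that avoids re-deriving $\lambda_{11}$ is to observe directly that $h_L$ is a $C^1$ conjugacy between the near-integrable maps $f_L(x,y)=(x+y+O(y^3),\,y+O(y^4))$ and $g_L$ of the same form, both fixing the boundary $\{y=0\}$ pointwise (so $h_L(x,0)=(x,0)$ after the marking $h(0,0)=(0,0)$, which gives the $(1,1)$ entry and the lower-left zero immediately), and then linearize the conjugacy relation $h_L^{-1}\circ g_L\circ h_L = f_L$ along $y=0$: the linear parts of $f_L$ and $g_L$ there are both the shear $\left(\begin{smallmatrix}1&1\\0&1\end{smallmatrix}\right)$, and conjugating a shear by an upper-triangular matrix $\left(\begin{smallmatrix}1&*\\0&b\end{smallmatrix}\right)$ forces $b=1$, giving the $(2,2)$ entry. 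I would present the first route since it ties in the constants already introduced, but mention that the second gives a conceptual cross-check.
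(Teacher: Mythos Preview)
Your alternative route—linearizing $h_L^{-1}\circ g_L\circ h_L=f_L$ along $y=0$, where both $Df_L$ and $Dg_L$ equal the shear $\left(\begin{smallmatrix}1&1\\0&1\end{smallmatrix}\right)$, reading off $c=0$ and $a=d$, and then using $h_L(x,0)=(x,0)$ for $a=1$—is precisely the paper's proof.

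Your primary route, however, does not close. Obtaining upper-triangularity of $Dh_L(x,0)$ from $Dh_L=DL_g\cdot\Lambda\cdot DL_f^{-1}$ together with Step~4 is fine, but the $(2,2)$ computation does not collapse to~$1$. First, your formula for $\lambda_{11}$ is inverted: solving the $(1,1)$-entry condition gives $\lambda_{11}=C^{-1}\rho_g^{2/3}(\hat s)\,\rho_f^{-2/3}(s)=C^{-1}\lambda^{-2}$, not $C^{-1}\rho_f^{2/3}\rho_g^{-2/3}$. More substantively, with the correct $\lambda_{11}$ and Step~4's ratio $\lambda_{22}/\lambda_{11}=\rho_g(\hat s)/\rho_f(s)$ one finds
\[
(Dh_L)_{22}=C\,\rho_g^{1/3}\rho_f^{-1/3}\,\lambda_{22}
=C\,\rho_g^{1/3}\rho_f^{-1/3}\cdot C^{-1}\lambda^{-2}\cdot\frac{\rho_g}{\rho_f}
=\Big(\frac{\rho_g(\hat s)}{\rho_f(s)}\Big)^{2},
\]
which equals $1$ only when $\rho_g(\hat s)\equiv\rho_f(s)$—exactly the conclusion of the Main Theorem, not something available to you here. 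So Step~4 is complementary to, not a substitute for, the shear calculation: you genuinely need the conjugacy-of-shears argument (your alternative route, the paper's route) to obtain $a=d$. There is also a logical-ordering concern in invoking Step~4 of the theorem's proof to establish a lemma advertised as input to that proof.
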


\begin{proof}
It follows from diagram (\ref{diagram}) that $f_L = h_L^{-1} \circ g_L \circ h_L $ and therefore
$Df_L(x,0) = Dh_L(x,0)^{-1} \cdot Dg_L(x,0) \cdot Dh_L(x,0)$, where we used that $h_L(x,0)=(x,0)$.

Let us denote
$$Dh_L(x,0) = \left(\begin{array}{ll}
a & b \\
c & d
\end{array} \right)$$
and let $D:= ad-bc$ be its determinant. A-priori, all these entries might depend on $x$. Then:

\begin{eqnarray*}
\left(\begin{array}{ll}
1 & 1 \\
0 & 1
\end{array} \right) &=& \dfrac{1}{D}
\left(\begin{array}{ll}
d & -b \\
-c & a
\end{array} \right) \cdot
\left(\begin{array}{ll}
1 & 1 \\
0 & 1
\end{array} \right) \cdot
\left(\begin{array}{ll}
a & b \\
c & d
\end{array} \right)  = \\
&=& \dfrac{1}{D} \left(\begin{array}{cc}
ad + dc - bc & d^2 \\
-c^2 & -cb -cd + ad
\end{array} \right)\,.
\end{eqnarray*}

It follows that $c=0$ and $D=ad$. Therefore, the above equality becomes:
\begin{eqnarray*}
\left(\begin{array}{ll}
1 & 1 \\
0 & 1
\end{array} \right) &=&
\left(\begin{array}{ll}
1 & \frac{d}{a} \\
0 & 1
\end{array} \right)
\end{eqnarray*}
and consequently $a=d$ for all $x$. Observe that actually $a(x)\equiv 1$, since $h_L(x,0)=(x,0)$.
In conclusion,
$$
Dh_L(x,0) =
\left(\begin{array}{ll}
1 & * \\
0 & 1
\end{array} \right)
$$
\end{proof}


\end{document}